\numberwithin{equation}{section}
\newtheorem{theorem}{Theorem}[section]
\newtheorem{thm}[theorem]{Theorem}
\newtheorem{lem}[theorem]{Lemma}
\newtheorem{prop}[theorem]{Proposition}
\newtheorem{coro}[theorem]{Corollary}
\theoremstyle{definition}
\newtheorem{setting}[theorem]{Setting}
\newtheorem{defi}[theorem]{Definition}
\newtheorem{exa}[theorem]{Example}
\newtheorem{rem}[theorem]{Remark}
 \newtheorem*{ackn}{Acknowledgements}
 \newtheorem*{thmA}{Theorem A} 
 \newtheorem*{thmB}{Theorem B} 
\newtheorem*{thmC}{Theorem C}
 \theoremstyle{plain}
\newtheorem*{namedthm}{\namedthmname}
\newcounter{namedthm}
 \newcommand{\D}{\mathbb D}
 \newcommand{\R}{\mathbb R}
 \newcommand{\Q}{\mathbb Q}
 \newcommand{\C}{\mathbb C}
  \newcommand{\PP}{\mathbb P}
 \newcommand{\N}{\mathbb N}
 \newcommand{\e}{\varepsilon}
 \newcommand{\f}{\varphi}
 \newcommand{\p}{\psi}
 \newcommand \PSH {{\rm PSH}}
\newcommand{\Ric}{{\rm Ric}}
\subjclass[2010]{32W20, 32U05, 32Q15, 35A23}
\keywords{Green's function, Monge-Amp\`ere  equation, a priori estimates}
\begin{document}

\title[K\"ahler families of Green's functions]{K\"ahler families of Green's functions}

\author{Vincent Guedj  \&  Tat Dat T\^o}

\address{Institut Universitaire de France et Institut de Math\'ematiques de Toulouse   \\ Universit\'e de Toulouse \\
118 route de Narbonne \\
31400 Toulouse, France\\}

\email{\href{mailto:vincent.guedj@math.univ-toulouse.fr}{vincent.guedj@math.univ-toulouse.fr}}
\urladdr{\href{https://www.math.univ-toulouse.fr/~guedj}{https://www.math.univ-toulouse.fr/~guedj/}}

\address{Sorbonne Universit\'e,  Institut de mathématiques de Jussieu – Paris Rive Gauche, 4, place Jussieu, 75252 Paris Cedex 05 France.}

\email{\href{mailto:tat-dat.to@imj-prg.fr}{tat-dat.to@imj-prg.fr}}
\urladdr{\href{https://sites.google.com/site/totatdatmath/home}{https://sites.google.com/site/totatdatmath/home}}

\date{\today}

 \begin{abstract}
 In a remarkable series of works, Guo, Phong, Song, and Sturm have obtained key uniform estimates for the Green's functions associated with certain K\"ahler metrics. In this note, we broaden the scope of their techniques by removing one of their assumptions and allowing the complex structure to vary. We apply our results to various families of canonical K\"ahler metrics.
 \end{abstract}

 \maketitle

\setcounter{tocdepth}{1}
\tableofcontents

\section*{Introduction}

Let $(X,\omega)$ be a compact K\"ahler manifold of complex dimension $n$.
We set $V_{\omega}=\int_X \omega^n$ and
let  $G_x^{\omega}$ denote the Green's function of $\omega$ at the point $x$.
This is the unique quasi-subharmonic function such that $\int_X G_x^{\omega} \omega^n=0$ and
$$
\frac{1}{V_{\omega}} (\omega+dd^c G_x^{\omega}) \wedge \omega^{n-1}=\delta_x.
$$
Here $\delta_x$ denotes the Dirac mass at point $x$. Equivalently
$
\Delta_{\omega} G_x^{\omega}=n \left( V_{\omega} \delta_x -\omega^n \right).
$

\smallskip

In a  remarkable series of articles \cite{GPS22,GPSS22,GPSS23}, Guo, Phong, Song and Sturm have 
obtained  several key estimates for $G_x^{\omega}$ that are uniform when $\omega$ varies 
in a large family ${\mathcal F}$ of K\"ahler forms
(our normalization for $G_x^{\omega}$ slightly differs from theirs, see Definition \ref{def:green} below).
 Although $G_x^{\omega}$ is a solution of the Laplace equation, its dependence on $\omega$ is highly non-linear.
 Nonetheless, these authors succeeded in obtaining spectacular estimates by comparing the problem at hand with an auxiliary 
complex Monge-Amp\`ere equation (an idea originating from \cite{CC21}),
for which fine uniform estimates are available 
(see \cite{Yau78,Kol98,EGZ09,EGZ08,DP10,BEGZ10,GL21,GPT23}).

\smallskip

Fix $\beta$ a reference K\"ahler form on $X$,
$A,B>0$ positive constants, and $\gamma \geq 0$ a continuous function such that $(\gamma =0)$  has small Hausdorff dimension.
The family ${\mathcal F}$ consists in those K\"ahler forms $\omega$ 
which satisfy the following three assumptions:
\begin{enumerate}
\item an upper-bound on the cohomology class $\int_X \omega \wedge \beta^{n-1} \leq A$;
\item a uniform pointwise lower bound $f_{\omega} \geq \gamma$, where
$f_{\omega}:=V_{\omega}^{-1}\omega^n/\beta^n$;
\item  a uniform upper bound  $\int_X f_{\omega} (\log f_{\omega})^p \beta^n \leq B$, where $p>n$.
\end{enumerate}

The first assumption can be restated as the cohomology class of $\omega$ remaining within a bounded subset of $H^{1,1}(X,\R)$.
It implies a uniform upper bound on the volume $V_{\omega} \leq C(A)$, but it is a strictly stronger assumption
(see Example \ref{exa:voldiam}).

The third assumption ensures that $\omega$ admits uniformly bounded potentials
(as shown in \cite{Kol98,EGZ08,DP10}), that moreover
have good continuity properties (see \cite{Kol08,DDGHKZ14,GPTW21,GGZ23}). It can be slightly generalized to
$$
\int_X f_{\omega} (\log f)^n (\log \log [3+f_{\omega}])^p \beta^n \leq B
\; \; \text{ with } \; \; 
p>2n,
$$
as shown in \cite{GGZ23}. In this article we rather stick to the more classical assumption
$\int_X f_{\omega}^p \beta^n \leq B$ with $p>1$, to simplify the exposition.

\medskip

Our goal in this note is twofold. 
First, we eliminate the second assumption by establishing a key new estimate (Proposition \ref{prop:key}).
    Second, we demonstrate that this new estimate, as well as all the estimates obtained in \cite{GPS22,GPSS22,GPSS23},
   remain uniform as the complex structure varies.
   This significantly broadens the scope of geometric applications for these results.

The precise setting is as follows. 
Let $\mathcal{X}$ be an irreducible and reduced  complex space. We let $\pi:{\mathcal X} \rightarrow 2 \mathbb D$ 
denote a proper, surjective holomorphic map with connected fibers such that each fiber $X_t=\pi^{-1}(t)$ is a $n$-dimensional 
 compact K\"ahler manifold, for $t\in 2 \mathbb D^*$. We allow the central fiber $X_0$ to be a singular,    irreducible and reduced complex space, as this is important for geometric applications.  
Here $\mathbb D$ is the unit disk in $\C$; we assume that the family is defined over a slightly larger disk
$2 \mathbb D$, and will establish estimates that are uniform with respect to the parameter $t \in \mathbb D^*$.

We fix $\beta$ a relative K\"ahler form on $\mathcal{X}$, i.e $\beta$ is a smooth  form on $\mathcal{X}$ whose restrictions  $\beta_t=\beta_{|X_t}$ are K\"ahler forms.  We assume that the volumes $V_{\beta_t}:=\int_{X_t} \beta_t^n$ are uniformly bounded away from 0 and $+ \infty $, and let $dV_{X_t}=\beta_t^n /V_{\beta_t}$  denote the corresponding  probability volume form on $X_t$.  

\smallskip

Fix $p>1$ and $A,B>0$.
We let ${\mathcal K}(\mathcal{X},p,A,B)$ denote the set of all relative K\"ahler forms $\omega$ on 
$\mathcal{X}\setminus X_0  $ such that for all $t \in \mathbb D^*$,  $\{\omega_t\}\leq A\{\beta_t\}$ in $H^{1,1}(X_t,\R)$, and
$$
\int_{X_t} f_t^p dV_{X_t} \leq B,
\; \; \text{ where } \; \; 
\frac{1}{V_{\omega_t}} \omega_t^n=f_t dV_{X_t}.
$$

Our main result is the following uniform control on Green's functions $G_x^{\omega_t}$.
 
 \begin{thmA}\label{thm_A}
    Fix $0<r<\frac{n}{n-1}$ and $0<s< \frac{2n}{2n-1}$.
For all $t \in \D^*, x \in X_t$ and  $\omega \in {\mathcal K}(\mathcal{X},p,A,B)$ we have
\begin{enumerate}
\item $\sup_{X_t} G_x^{\omega_t} \leq C_0=C_0(n,p,A,B)$;

\smallskip

\item $\frac{1}{V_{\omega_t}} \int_{X_t} |G_x^{\omega_t}|^r  \omega_t^n  \leq C_1=C_1(n,p,r,A,B)$;

\smallskip

\item $\frac{1}{V_{\omega_t}} \int_{X_t} |\nabla G_x^{\omega_t}|_{\omega_t}^s \omega_t^n \leq C_2=C_2(n,p,s,A,B)$.
\end{enumerate}
\end{thmA}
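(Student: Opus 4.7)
The plan is to follow the Guo--Phong--Song--Sturm scheme from \cite{GPS22,GPSS22,GPSS23} fiber by fiber, with two essential modifications: (i) replace the pointwise lower bound $f_\omega \geq \gamma$ used there by our new estimate Proposition \ref{prop:key}, which requires only $L^p$ integrability of the density; and (ii) carefully track constants to ensure they depend only on $n,p,A,B$ and on the fixed family data $(\mathcal{X},\pi,\beta)$, but not on $t\in\D^*$.

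For part~(1), fix $t\in\D^*$, $x\in X_t$ and $\omega_t$ a fiber of $\omega\in{\mathcal K}(\mathcal{X},p,A,B)$. Following the Chen--Cheng strategy from \cite{CC21}, one solves an auxiliary complex Monge--Amp\`ere equation of the type
\[
(A\beta_t + dd^c u_t)^n = c_t \, e^{-\lambda G_x^{\omega_t}}\,\omega_t^n
\]
(after truncating $G_x^{\omega_t}$ at a suitable level, and choosing $\lambda>0$ and the normalizing constant $c_t$ appropriately). Because of the $L^p$ bound on $f_t$, the right-hand side has a uniformly bounded $L^{q}$ norm for some $q>1$, so the sharp Ko\l odziej-type estimates of \cite{EGZ08,DP10,GPT23} yield $\|u_t\|_{L^\infty}\leq C(n,p,A,B)$. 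Comparing $u_t$ and $G_x^{\omega_t}$ by integration by parts against $\omega_t^{n-1}$, while invoking Proposition \ref{prop:key} to bypass the missing lower bound on $f_t$, converts this into the sup-norm estimate of~(1).

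For parts~(2) and~(3), the sup-norm control from (1) is combined with the distribution-function identity $|G_x^{\omega_t}|=\int_0^\infty \mathbf{1}_{\{|G_x^{\omega_t}|>s\}}\,ds$ and Stokes' theorem, which reduces both integrals to uniform bounds on the $\omega_t^n$-volumes and $\omega_t$-capacities of sublevel sets $\{G_x^{\omega_t}<-s\}$; these are precisely what Proposition \ref{prop:key} controls. The gradient estimate~(3) further requires a Caccioppoli-type inequality combined with H\"older's inequality. The thresholds $r<n/(n-1)$ and $s<2n/(2n-1)$ arise from the natural Sobolev scaling on a complex $n$-fold, exactly as in \cite{GPSS22}.

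The main expected obstacle is the uniformity in $t$. Since $\beta$ is smooth on $\mathcal{X}$ and the volumes $V_{\beta_t}$ are bounded away from $0$ and $+\infty$ on $\overline{\D}$, the Ko\l odziej-type $L^\infty$ estimates depend only on the quantities $\{\omega_t\}\leq A\{\beta_t\}$, $V_{\omega_t}$, and $\int_{X_t} f_t^p\,dV_{X_t}\leq B$, all uniform in $t\in\D^*$. What requires genuine care is to check that the constants in Proposition \ref{prop:key} remain stable as the complex structure degenerates at $t=0$, so that no constant blows up as $t\to 0$; granting this, the three estimates of Theorem~A follow with constants depending only on $n,p,A,B$ and the fixed family $(\mathcal{X},\pi,\beta)$.
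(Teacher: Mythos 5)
Your proposal is a plan rather than a proof, and the decisive mechanisms of the paper's argument are missing or misidentified. The most serious issue is part (1). The paper does not solve an auxiliary Monge--Amp\`ere equation with right-hand side $e^{-\lambda G_x^{\omega_t}}\omega_t^n$; that route is problematic, since for $n\geq 2$ the Green's function has a polynomial singularity $\sim -d(x,\cdot)^{2-2n}$, so $e^{-\lambda G_x}$ is not integrable for any $\lambda>0$, and even after truncation a uniform $L^q$ bound on the truncated exponentials would require precisely the integrability of $G_x$ that one is trying to establish (note that $G_x$ is only $\omega$-sh, not $\omega$-psh, so the Skoda-type Theorem \ref{thm:sz} does not apply to it). What the paper actually does is a duality argument: let $v$ solve $\Delta_\omega v=h$ with $h=-\mathbf{1}_{\{G_x\leq 0\}}+\int_{\{G_x\leq 0\}}\omega^n/V_\omega$ and $\int_X v\,\omega^n=0$; Proposition \ref{prop:key} gives $\|v\|_{L^\infty}\leq C$, and the Green representation $v(x)=\frac{1}{V_\omega}\int_X G_x\,dd^c v\wedge\omega^{n-1}$ turns this into the uniform $L^1$ bound $\frac{1}{V_\omega}\int_X|G_x|\,\omega^n\leq 2C/n$; Lemma \ref{lem:5.1} then upgrades the $L^1$ bound to the upper bound $\sup_X G_x\leq C_0$. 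Your statement that Proposition \ref{prop:key} ``controls capacities of sublevel sets'' is not what it says: it is an $L^\infty$ estimate for normalized solutions of $\Delta_\omega u=h$ with $\|h\|_\infty\leq 1$, and it enters only through the duality above.

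For part (2) the paper's argument is a bootstrap, not a distribution-function/capacity argument: one solves the auxiliary Laplace equation with right-hand side proportional to $(-\mathcal{G}_x)^{\beta}\omega^n$, proves its solution is uniformly bounded below by comparing (via the AM--GM inequality and Lemma \ref{lem:5.1}) with the $\omega$-psh solution of the Monge--Amp\`ere equation with density proportional to $(-\mathcal{G}_x)^{n\beta}$ (whose $L^{p'}$ norm is controlled by H\"older and the previous step, so Theorem \ref{thm:uniformMA} applies), and then duality yields $\int(-\mathcal{G}_x)^{1+\beta}\omega^n/V_\omega\leq C$. Iterating raises the exponent along the series $1+\tfrac1n+\tfrac1{n^2}+\cdots$, which is why the threshold is $\tfrac{n}{n-1}$ --- it is the limit of this geometric series, not a Sobolev exponent. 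For part (3) your instinct (a Caccioppoli-type weighted energy identity plus H\"older) matches Lemma \ref{lem:weightedgradient}, but it only yields the stated range $s<\tfrac{2n}{2n-1}$ when combined with the full range of (2), which your argument has not established. Finally, the uniformity in $t$ that you flag as the ``main obstacle'' is exactly what Theorems \ref{thm:sz} and \ref{thm:uniformMA} (imported from \cite{DnGG23}) provide; acknowledging the issue is not the same as resolving it.
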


These estimates extend \cite[Theorem 1 and Theorem 2]{GPS22} and \cite[Theorem 1.1]{GPSS22}.
The uniformity with respect to the complex structure crucially depends on the uniform estimates
obtained in \cite{DnGG23}.

As in \cite{GPSS22} they easily imply a uniform control on diameters, as well
as a uniform non-collapsing estimate. Together with the uniform upper bound on volumes, this yields
in particular that the metric spaces $(X_t,\omega_t)$ are relatively compact in the Gromov-Hausdorff topology.

 \begin{thmB}
 Fix $0<\delta<1$, and $\omega \in {\mathcal K}(\mathcal{X},p,A,B)$. 
  There exists $C=C(n,p,\delta,A,B)>0$ such that  for all $ t\in \D^*, x \in X_t$ and $r>0$,
  $$
  C  \min(1,r^{2n+\delta}) \leq \frac{1}{V_{\omega_t}} {\rm Vol}_{\omega_t}(B_{\omega_t}(x,r))
  \; \; \text{ and } \; \; 
  {\rm diam}(X_t,\omega_t) \leq C.
  $$
 Thus the family of compact metric spaces 
  $\{ (X_t,d_{\omega_t}), \; \omega \in {\mathcal K}(\mathcal{X},p,A,B), \,  t \in \D^* \}$ 
  is pre-compact in the Gromov-Hausdorff topology.
\end{thmB}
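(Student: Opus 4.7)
My plan is to derive both assertions from Theorem A via the Green's representation formula. Writing $dV_t := \omega_t^n/V_{\omega_t}$ for the probability volume and integrating by parts the defining equation $\Delta_{\omega_t} G_x^{\omega_t} \cdot \omega_t^n = n(V_{\omega_t}\delta_x - \omega_t^n)$ against a Lipschitz test function $u$, one obtains
\[
u(x) - \bar u \;=\; -\frac{1}{n} \int_{X_t} \langle \nabla G_x^{\omega_t}, \nabla u\rangle_{\omega_t}\, dV_t, \qquad \bar u := \int_{X_t} u\, dV_t.
\]
This identity is first checked for smooth $u$ and then extended to Lipschitz $u$ by mollification, relying on the bound $\nabla G_x^{\omega_t} \in L^s(dV_t)$ from Theorem A(3).

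\textbf{Diameter upper bound.} Fix any $s \in (1, 2n/(2n-1))$. For the $1$-Lipschitz function $u(z) := d_{\omega_t}(x,z)$ we have $|\nabla u|_{\omega_t} \leq 1$ almost everywhere, so applying the identity at any point $p \in X_t$ and using H\"older with $dV_t$ a probability measure gives
\[
|u(p) - \bar u| \;\leq\; \frac{1}{n} \int_{X_t} |\nabla G_p^{\omega_t}|_{\omega_t}\, dV_t \;\leq\; \frac{1}{n}\, C_2^{1/s}.
\]
Taking $p = x$ (where $u(x) = 0$) yields $\bar u \leq C_2^{1/s}/n$, and taking $p = y$ yields $d_{\omega_t}(x,y) = u(y) \leq \bar u + C_2^{1/s}/n \leq (2/n)\, C_2^{1/s}$, uniformly in $t$, $x$, $y$ and $\omega$.

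\textbf{Non-collapsing.} Given $\delta \in (0,1)$, choose $s \in (1, 2n/(2n-1))$ so that the conjugate exponent equals $s' = 2n+\delta$. Fix $x \in X_t$, $r > 0$, set $V_r := \mathrm{Vol}_{\omega_t}(B_{\omega_t}(x,r))/V_{\omega_t}$, and pick a Lipschitz cutoff $\psi$ with $\psi \equiv 1$ on $B_{\omega_t}(x, r/2)$, $\psi \equiv 0$ off $B_{\omega_t}(x, r)$, and $|\nabla \psi|_{\omega_t} \leq 4/r$. Either $V_r \geq 1/2$ and the claim is trivial, or $\bar\psi \leq V_r \leq 1/2$ so that $\psi(x) - \bar\psi \geq 1/2$, in which case the representation formula combined with H\"older yields
\[
\frac{1}{2} \;\leq\; \frac{1}{n}\, \|\nabla G_x^{\omega_t}\|_{L^s(dV_t)}\, \|\nabla \psi\|_{L^{s'}(dV_t)} \;\leq\; \frac{4\, C_2^{1/s}}{nr}\, V_r^{1/s'}.
\]
Rearranging gives $V_r \geq c\, r^{2n+\delta}$ for some $c = c(n,p,\delta,A,B) > 0$, which combined with $V_r \leq 1$ yields the $\min(1, r^{2n+\delta})$ lower bound.

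\textbf{Pre-compactness and main obstacle.} A maximal $\varepsilon$-separated subset of $(X_t, d_{\omega_t})$ has pairwise disjoint open balls of radius $\varepsilon/2$, hence by non-collapsing its cardinality is bounded by $1/(c(\varepsilon/2)^{2n+\delta})$, uniformly in $t$ and $\omega$. Combined with the diameter bound, this uniform total boundedness yields Gromov-Hausdorff pre-compactness via Gromov's theorem. The only delicate technical point is the justification of the Green's representation for non-smooth test functions (distance function and cutoff); this is routine via mollification, but is precisely where the sharp $L^s$-regularity of $\nabla G_x^{\omega_t}$ from Theorem A(3) is essential, and the weaker $L^r$-integrability of $G_x^{\omega_t}$ itself from Theorem A(2) would not suffice.
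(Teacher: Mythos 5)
Your proof is correct and follows essentially the same route as the paper: Green's representation formula applied to the distance function (for the diameter) and to a Lipschitz cutoff (for non-collapsing), with the $L^s$ gradient bound of Theorem A(3) and Hölder's inequality doing the work, followed by Gromov's compactness criterion. The only difference is cosmetic — you evaluate the cutoff at the ball's center with a dichotomy on $V_r$, whereas the paper tests $\rho\chi$ at a point of $\partial B_{\omega_t}(x,r/2)$ and at an exterior point — and your version is if anything slightly cleaner.
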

 
Following \cite{GPSS23}, we further show in Theorem \ref{thm:sobolev}
that the Sobolev constants of the metrics $\omega_t$ are also uniformly bounded.

\smallskip

Theorems A and B have many geometric applications. We 
explain in Section \ref{sec:csck} how they provide uniform estimates
for families of constant scalar curvature K\"ahler metrics (see Corollary \ref{cor:csck}), following
\cite{CC21} and \cite{PTT23}.

As in \cite{GPSS22}, we then apply our estimates to the study of the K\"ahler-Ricci flow.
Assume that $X$ is a  a  smooth minimal model ($K_X$ nef), and consider
   $$
   \frac{\partial \omega_t}{\partial t}=-{\rm Ric}(\omega_t)-\omega_t
   $$
   the normalized K\"ahler-Ricci flow starting from an initial K\"ahler metric $\omega_0$.
   It follows from \cite{TZ06} that this   flow exists for all times $t>0$.
    We obtain here the following striking result which -in particular- solves \cite[Conjecture 6.2]{Tos18}:

 \begin{thmC}
   Fix $0<\delta<1$.
   There exist $C=C(\omega_0)>0$ and $c=c(\omega_0,\delta)>0$ such that for all $t>0$ and $x \in X$,
   $$
   {\rm diam}(X,\omega_t) \leq C  
   \; \; \text{ and } \; \;
     {\rm Vol}_{\omega_t}(B_{\omega_t}(x,r)) \geq c  r^{2n+\delta} V_{\omega_t},
   $$
   whenever $0<r<{\rm diam}(X,\omega_t)$.
 \end{thmC}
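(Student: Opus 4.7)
The plan is to apply Theorem B to the normalized K\"ahler-Ricci flow trajectory $(\omega_t)_{t>0}$. Let $\mathcal{X}=X\times 2\D$ with $\pi$ the projection, fix any K\"ahler form $\beta$ on $X$ as reference, and reparametrize time by $s=e^{-t}$, sending $t>0$ to $s\in(0,1)\subset\D^*$. A smooth relative K\"ahler form $\widetilde{\omega}$ on $\mathcal{X}\setminus X_0\simeq X\times(2\D\setminus\{0\})$ can be defined by letting its restriction to the fiber over $s$ be $\omega_{-\log|s|}$ (extended in any smooth way to the rest of $2\D\setminus\{0\}$). Both conclusions of Theorem C then follow from the corresponding conclusions of Theorem B applied to $\widetilde{\omega}$ and restricted to $s\in(0,1)$, provided $\widetilde{\omega}\in\mathcal{K}(\mathcal{X},p,A,B)$ for some admissible $p>1$ and $A,B>0$.

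The cohomological bound $\{\omega_t\}\leq A\{\beta\}$ in $H^{1,1}(X,\R)$ is immediate from the linear evolution of the K\"ahler class: integrating $\partial_t\{\omega_t\}=c_1(K_X)-\{\omega_t\}$ gives $\{\omega_t\}=e^{-t}\{\omega_0\}+(1-e^{-t})c_1(K_X)$, which traces a compact segment of $H^{1,1}(X,\R)$. The uniform $L^p$ density bound, $\int_X f_t^p\,dV_X\leq B$ for $f_t:=V_{\omega_t}^{-1}\omega_t^n/dV_X$, is the main analytic input. To establish it I would reduce the flow to the parabolic complex Monge-Amp\`ere equation
$$
\dot{\varphi}_t=\log\frac{(\chi_t+dd^c\varphi_t)^n}{\Omega}-\varphi_t
$$
for a potential $\varphi_t$ relative to a smooth path of reference forms $\chi_t\in\{\omega_t\}$, where $\Omega$ is a smooth volume form with $-\Ric(\Omega)\in c_1(K_X)$, and invoke the uniform $L^\infty$ bound on $\varphi_t$ (Song-Tian) together with the pointwise bound $\omega_t^n/\Omega\leq C$ obtained from parabolic estimates in the spirit of \cite{TZ06}.

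The main obstacle will be the collapsing regime, in which $K_X$ is nef but not big and $V_{\omega_t}\to 0$ as $t\to+\infty$: there the pointwise bound on $\omega_t^n/\Omega$ alone does not suffice to control $f_t$ in $L^p$, because the normalization $V_{\omega_t}^{-p}$ in $\int_X f_t^p\,dV_X$ diverges. To handle this I would refine the parabolic Monge-Amp\`ere estimates using the Song-Tian structure of the collapsing K\"ahler-Ricci flow (cf.\ \cite{Tos18}) to establish a pointwise estimate of the form $\omega_t^n\leq C\,V_{\omega_t}\,\beta^n$ outside a small neighborhood of the Iitaka discriminant locus, and then absorb the contribution of that neighborhood via an $L^p$--$L^\infty$ interpolation. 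Once this yields $\int_X f_t^p\,dV_X\leq B$ uniformly in $t$ for some $p>1$, Theorem B applies directly and produces simultaneously the uniform diameter upper bound and the non-collapsing volume lower bound of Theorem C, with exponent $2n+\delta$ inherited from Theorem B.
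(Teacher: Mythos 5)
Your top-level reduction is the same as the paper's: verify that the flow metrics lie in ${\mathcal K}(\mathcal{X},p,A,B)$ (with $\mathcal{X}=X\times 2\D$, say) and invoke Theorem B. The cohomological bound via $\{\omega_t\}=e^{-t}\{\omega_0\}+(1-e^{-t})c_1(K_X)$ is correct. The problem is in the density estimate, which is the entire content of the theorem, and there your plan has a genuine gap. Your proposed rescue of the collapsing regime --- a pointwise bound $\omega_t^n\leq C\,V_{\omega_t}\,\beta^n$ away from ``the Iitaka discriminant locus'' via the Song--Tian structure theory, plus an absorption argument near it --- presupposes that the Iitaka fibration exists, i.e.\ that ${\rm kod}(X)\geq 0$. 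That is exactly the hypothesis of \cite[Theorem 2.3]{GPSS22} which Theorem C is designed to remove: when ${\rm kod}(X)=-\infty$ (which is not ruled out by $K_X$ nef unless one assumes a weak form of abundance), there is no Iitaka fibration and no discriminant locus, so your argument cannot even get started in the only new case. Likewise, the ``uniform $L^\infty$ bound on $\varphi_t$ (Song--Tian)'' you invoke is not available in this generality; and it is also not needed.

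The correct (and simpler) fix is to build the volume decay into the normalization of the parabolic potential. Let $\nu$ be the \emph{numerical} dimension of $K_X$, which is always defined, and write the flow as
\begin{equation*}
\bigl(e^{-t}\omega_0+(1-e^{-t})\chi+dd^c\f_t\bigr)^n=e^{\partial_t\f_t+\f_t-(n-\nu)t}\,\omega_0^n,\qquad \f_0=0 .
\end{equation*}
Cohomological expansion of $\{\omega_t\}^n$ gives $V_{\omega_t}\geq c\,e^{-(n-\nu)t}$ for all $t$, since $c_1(K_X)^{\nu}\{\omega_0\}^{n-\nu}>0$ by definition of $\nu$ and nefness. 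Then one only needs \emph{upper} bounds: $\sup_X\f_t\leq C_0$ follows from bounding $I(t)=\int_X\f_t\,\omega_0^n/V_0$ via the concavity of the logarithm and $\sup_X\f_t\leq I(t)+C$; and $\partial_t\f_t\leq C_1$ follows from the maximum principle applied to $H=(e^t-1)\partial_t\f_t-\f_t-\nu t-(n-\nu)e^t$, for which $(\partial_t-\Delta_{\omega_t})H\leq 0$. Together these give the \emph{global pointwise} bound $\omega_t^n\leq C e^{-(n-\nu)t}\omega_0^n\leq C'V_{\omega_t}\,\omega_0^n$, i.e.\ $\|f_t\|_{L^\infty}\leq C'$, with no excision near any locus and no $L^p$--$L^\infty$ interpolation. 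After that, Theorem B applies exactly as you say. So the obstacle you identified is real, but your route around it is circular relative to the point of the theorem; the resolution is the $e^{-(n-\nu)t}$ normalization plus the two maximum-principle upper bounds.
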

 
 This result has been established by Guo-Phong-Song-Sturm under the extra assumption that 
  the Kodaira dimension ${\rm kod}(X)$ of $X$ is non-negative, see \cite[Theorem 2.3]{GPSS22}.
  It is an old conjecture of Mumford that $K_X$ nef implies ${\rm kod}(X) \geq 0$.
  The latter is a weak form of the abundance conjecture, one of the most important open problem in 
  birational geometry.
   We refer the reader to \cite{Tos18,Tos24} for recent overviews of the theory of the K\"ahler-Ricci flow.

 \smallskip
 
 In Section \ref{sec:ke} we show how our estimates provide an alternative proof of an important
 diameter bound of Y.Li \cite[Theorem 1.4]{Li23}.
 We anticipate many more applications of Theorems A and B (see \cite{CGNPPW23,GP24} for recent developments).
  In \cite{GPSS23}, the authors have partially extended their techniques
to the case of mildly singular K\"ahler varieties, showing in particular that K\"ahler-Einstein currents have
bounded diameter. Our key Proposition \ref{prop:key} can be extended to this context as well, as it relies solely
on uniform estimates for solutions to complex Monge-Amp\`ere equations, 
which are applicable here according to \cite[Theorem 1.9]{DnGG23}.
Therefore, we expect that most of the results presented in this note can be extended to families of 
 K\"ahler varieties (see \cite[Section 3]{GP24} for some first steps).
  Very recently Vu \cite{V24} announced a proof of Theorem B for a fixed variety
  and an independent proof of Theorem C by using analysis on Sobolev spaces associated to currents;
shortly afterwards Guo-Phong-Song-Sturm \cite{GPSS24} have provided and alternative proof of Proposition \ref{prop:key}.

\begin{ackn} 
We thank D.H.Phong for enlightning discussions about his joint works with Guo, Song and Sturm on Green's functions.
We  are grateful to H.C.Lu for several useful comments on a preliminary draft.
T.D.T. is partially supported by the project MARGE ANR-21-CE40-0011.
V.G. is partially supported by the Institut Universitaire de France
and fondation Charles Defforey.
\end{ackn}


\section{Uniform estimates for Monge-Amp\`ere potentials}

 \subsection{Quasi-(pluri)subharmonic functions}
 
Let $(X, \omega)$ be a compact K\"ahler manifold of complex dimension $n$.

  \subsubsection{$\omega$-subharmonic functions}

 \begin{defi}
A function $v:X \rightarrow \R \cup \{-\infty\}$ 
is $\omega$-subharmonic ($\omega$-sh for short) if it is locally the sum of a smooth and a subharmonic function, and  
 $(\omega+dd^c u) \wedge \omega^{n-1} \geq 0$ in the sense of distributions.
 Alternatively 
 $$
 \Delta_{\omega} v=n \frac{dd^c v \wedge \omega^{n-1}}{\omega^n} \geq -n.
 $$
 \end{defi}
 
 The maximum principle ensures that two $\omega$-sh functions $u,v$ satisfy $\Delta_{\omega} u=\Delta_{\omega} v$ if and only if they differ by a constant. We can thus ensure that $u=v$ by normalizing them
by $\int_X u \omega^n=\int_X v \omega^n=0$.

\begin{defi} \label{def:green}
We let  $G_x$ denote the Green function of $\omega$ at the point $x$.
This is the unique $\omega$-sh function such that $\int_X G_x \omega^n=0$ and
$$
\frac{1}{V_{\omega}} (\omega+dd^c G_x) \wedge \omega^{n-1}=\delta_x.
$$
Here $\delta_x$ denotes the Dirac mass at point $x$.
\end{defi}

 Let us stress that our definition differs from that 
of \cite{GPSS22} in two ways: we use the opposite sign convention, as well as a different volume normalization.
If $\tilde{G}_x$ denotes the Green function from \cite{GPSS22}, then
$G_x=-\frac{V_{\omega}}{n}\tilde{G}_x$.

\smallskip

For any $\omega$-sh function $u$ such that $\int_X u \omega^n=0$, Stokes theorem yields
$$
u(x)=\frac{1}{V_{\omega}} \int_X u(\omega+dd^c G_x) \wedge \omega^{n-1}=\frac{1}{V_{\omega}} \int_X G_x (\omega+dd^c u) \wedge \omega^{n-1}.
$$
In particular for $u=G_y$ we obtain the symmetry relation $G_x(y)=G_y(x)$.
Green's functions are classical objects of study in Riemannian geometry. In particular it is known 
that $G_x \in {\mathcal C}^{\infty}(X \setminus \{x \})$ and $G_x(y) \rightarrow -\infty$ as 
$y \rightarrow x$ (either at a logarithmic speed if $n=1$, or at a polynomial speed if $n \geq 2$).

\smallskip

While the Laplace operator $\Delta_{\omega}$ is linear, it depends on $\omega$ in a non linear way. 
Following \cite{GPS22,GPSS22,GPSS23} we are going to establish uniform estimates on normalized
$\omega$-sh functions, by comparing them with $\omega$-plurisubharmonic solutions to certain complex Monge-Amp\`ere equations.

  \subsubsection{$\omega$-plurisubharmonic functions}

  A function is quasi-plurisub\-harmonic (quasi-psh) if it is locally given as the sum of  a smooth and a psh function.   
 Quasi-psh functions
$\f:X \rightarrow \R \cup \{-\infty\}$ satisfying
$
\omega_{\f}:=\omega+dd^c \f \geq 0
$
in the weak sense of currents are called $\omega$-plurisubharmonic ($\omega$-psh for short).

\begin{defi}
We let $\PSH(X,\omega)$ denote the set of all $\omega$-plurisubharmonic functions which are not identically $-\infty$.  
\end{defi}

Note that constant functions are $\omega$-psh functions.
A ${\mathcal C}^2$-smooth function $u$ has bounded Hessian, hence $\e u$ is
$\omega$-psh if $0<\e$ is small enough and $\omega$ is  positive.

The set $\PSH(X,\omega)$ is a closed subset of $L^1(X)$, 
for the $L^1$-topology.
Subsets of  $\omega$-psh functions enjoy strong compactness and integrability properties,
we mention notably the following: for any fixed $r \geq 1$, 
\begin{itemize}
\item $\PSH(X,\omega) \subset L^r(X)$;
 the induced $L^r$-topologies are equivalent;
\item $\PSH_A(X,\omega):=\{ u \in\PSH(X,\omega), \, -A \leq \sup_X u \leq 0 \}$ is compact in  $L^r$.
\end{itemize}
We refer the reader to \cite{Dem12,GZbook} for further basic properties of $\omega$-psh functions.

   \subsubsection{Laplace vs Monge-Amp\`ere solutions} \label{sec:LaplacevsMA}
   
  We shall regularly compare  solutions to the Laplace equation 
   and $\omega$-psh solutions to an auxiliary Monge-Amp\`ere equation.
   The following principle will  play an important role.

\begin{prop} \label{pro:comparisonLMA}
Fix  $t>0$, $p>1$ and  $0 \leq f \in L^p(\omega^n)$. Let $v$ (resp. $\f$) be the unique bounded  $\omega$-sh (resp. $\omega$-psh)
function such that  
$$
 (\omega+dd^c v) \wedge \omega^{n-1}=e^{tv} f \omega^n
\; \; \text{ and } \; \; 
(\omega+dd^c \f)^n = e^{nt\f} f^n \omega^n.
$$
Then $\f \leq v$.
\end{prop}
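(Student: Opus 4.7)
The plan is to exhibit $\varphi$ as a subsolution to the quasi-linear equation satisfied by $v$ and then invoke a maximum principle. The key pointwise inequality comes from AM--GM applied to the eigenvalues of $\omega+dd^c\varphi$ with respect to $\omega$: if these eigenvalues are $\lambda_1,\dots,\lambda_n \geq 0$ at a given point, then
$$
\frac{(\omega+dd^c\varphi)\wedge\omega^{n-1}}{\omega^n} \;=\; \frac{\lambda_1+\cdots+\lambda_n}{n} \;\geq\; (\lambda_1\cdots\lambda_n)^{1/n} \;=\; \left(\frac{(\omega+dd^c\varphi)^n}{\omega^n}\right)^{1/n} \;=\; e^{t\varphi}f.
$$
For a bounded $\omega$-psh $\varphi$, this inequality extends in the sense of measures by approximating $\varphi$ by a decreasing sequence of smooth $\omega$-psh functions and using Bedford--Taylor weak-continuity of the relevant mixed Monge--Amp\`ere operators. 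Equivalently, $\Delta_\omega \varphi \geq n(e^{t\varphi}f - 1)$ in the weak sense, so $\varphi$ is a subsolution to the Laplace-type equation solved by $v$.

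Setting $w := \varphi - v$ and subtracting the equation for $v$ yields
$$
\Delta_\omega w \;\geq\; n f \bigl(e^{t\varphi} - e^{tv}\bigr).
$$
Suppose for contradiction that $\sup_X w > 0$. Both $v$ and $\varphi$ are continuous (the former by Calder\'on--Zygmund regularity applied to $\Delta_\omega v \in L^p$, the latter by Ko\l{}odziej's theorem under the $L^p$ hypothesis), so $w$ is continuous and $U := \{w > 0\}$ is a non-empty open set on which the right-hand side above is non-negative; hence $w$ is (weakly) subharmonic on $U$. If $U$ were equal to $X$, then integration by parts would give $0 = \int_X \Delta_\omega w \,\omega^n \geq n\int_X f\bigl(e^{t\varphi} - e^{tv}\bigr)\omega^n \geq 0$, forcing $f \equiv 0$ a.e.\ on $X$ and contradicting $V_\omega = \int_X (\omega+dd^c\varphi)^n = \int_X e^{nt\varphi} f^n \omega^n$. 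Thus $U \subsetneq X$; continuity of $w$ gives $w = 0$ on $\partial U$, so the maximum principle on $U$ forces $\sup_U w \leq 0$, a contradiction. Hence $\varphi \leq v$.

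The main technical obstacle lies in justifying the AM--GM step for non-smooth $\varphi$: one must check that $(\omega+dd^c\varphi)\wedge\omega^{n-1}$ is absolutely continuous with respect to $\omega^n$ with density at least $e^{t\varphi}f$ almost everywhere. A clean way to bypass this is to run the whole argument at the smooth level by first replacing $f$ with a smooth positive approximant $f_\varepsilon$, solving both equations (whose smooth solutions $\varphi_\varepsilon, v_\varepsilon$ satisfy the AM--GM inequality pointwise), obtaining $\varphi_\varepsilon \leq v_\varepsilon$, and then passing to the limit $\varepsilon \to 0$ using $L^\infty$-stability of solutions to each equation under $L^p$ perturbations of the right-hand side.
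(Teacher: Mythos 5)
Your strategy coincides with the paper's: the heart of the matter is the mixed Monge--Amp\`ere (AM--GM) inequality $(\omega+dd^c\varphi)\wedge\omega^{n-1}\geq e^{t\varphi}f\,\omega^n$, which exhibits $\varphi$ as a bounded $\omega$-sh subsolution of the twisted Laplace equation solved by $v$; the paper then concludes by invoking the characterization of $v$ as the upper envelope of bounded subsolutions, whereas you re-derive that comparison by hand via a maximum principle on $U=\{\varphi>v\}$. For the AM--GM step the paper simply cites Dinew--Ko\l odziej applied to $\varphi$ and the trivial solution $\varphi_2=0$; your Bedford--Taylor approximation sketch is the right idea, but as you yourself note this is precisely the nontrivial content of that reference, so citing it is cleaner than reproving it.

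The step I would push back on is your continuity claim. Calder\'on--Zygmund gives $v\in W^{2,p}$, but $W^{2,p}\hookrightarrow C^0$ on a manifold of real dimension $2n$ requires $p>n$, so for $1<p\leq n$ continuity of $v$ does not follow from elliptic regularity alone; likewise the density of the Monge--Amp\`ere equation for $\varphi$ is $e^{nt\varphi}f^n\in L^{p/n}$, which fails Ko\l odziej's hypothesis of an exponent strictly bigger than $1$ when $p\leq n$. Since your open-set argument needs $w=\varphi-v$ continuous (to know that $U$ is open and that $w=0$ on $\partial U$), the direct version of your comparison has a hole in the stated generality $p>1$. Your fallback --- regularize $f$, compare the smooth solutions pointwise at a maximum of $\varphi_\varepsilon-v_\varepsilon$, and pass to the limit --- is the right repair, though the $L^\infty$-stability you need for the linear equation again uses integrability of the Green kernel rather than a soft perturbation argument; alternatively one can sidestep continuity entirely by quoting, as the paper does, the comparison principle for the semilinear equation with monotone zeroth-order term $e^{tu}$, valid for bounded $\omega$-sh sub- and supersolutions. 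Note finally that in the paper's only application of this proposition the density $f=e^{-tv}(1+H)$ is bounded, so none of these regularity issues arise there.
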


The existence of $v$ is classical. 
For the existence and uniqueness of $\f$, see \cite{GZbook}.

\begin{proof}
It follows from the maximum principle that $v$ is the envelope of bounded $\omega$-sh subsolutions to this twisted Laplace equation, i.e. for all $x \in X$,
$$
v(x)=\sup \left\{ u(x); \; u \in SH(X,\omega) \cap L^{\infty}(X)
\; \text{ and } \; 
 (\omega+dd^c u) \wedge \omega^{n-1} \geq e^{tu} f \omega^n \right\}.
$$

Since the function $\f_2=0$ is the trivial solution to the twisted Monge-Amp\`ere equation
$(\omega+dd^c \f_2)^n = e^{nt\f_2} 1^n \omega^n$,
the AM-GM inequality (see \cite[Theorem 2.12]{DK14}) ensures that 
$$
(\omega+dd^c \f) \wedge \omega^{n-1} \geq e^{t \f}  f \omega^n.
$$
The conclusion follows as $PSH(X,\omega) \subset SH(X,\omega)$
and $\f$ is a bounded subsolution of the twisted Laplace equation.
\end{proof}

  Uniform a priori estimates for solutions to complex Monge-Amp\`ere equations
  have been intensively studied in the past decades. The previous proposition 
  will allow one to use them and gain uniform $L^{\infty}$ a priori estimates
  for solutions of the Laplace equation, as the reference form $\omega$ varies
  (see Section \ref{sec:uniformGreen}).

   \subsection{K\"ahler families} \label{sec:uniformMA}
   
   \subsubsection{Assumptions}
   
  We shall consider the following set of assumptions.
    
\begin{setting} \label{set}
Let $\mathcal{X}$ be an irreducible and reduced  complex space. We let $\pi:{\mathcal X} \rightarrow 2 \mathbb D$ 
denote a proper, surjective holomorphic map with connected fibers such that each fiber $X_t=\pi^{-1}(t)$ is a $n$-dimensional 
 compact K\"ahler manifold, for $t\in 2 \mathbb D^*$  and  $X_0$ is an irreducible and reduced complex space. 
We fix $\beta$ a relative K\"ahler form on $\mathcal{X}$, set $\beta_t=\beta_{|X_t}$  and assume that the volumes $V_{\beta_t}:=\int_{X_t} \beta_t^n$ are uniformly bounded away from 0 and $+ \infty $.  Let $dV_{X_t}=\beta_t^n /V_{\beta_t}$  denote the corresponding  probability volume form on $X_t$.  
\end{setting}

Here $\mathbb D$ is the unit disk in $\C$; we assume that the family is defined over a slightly larger disk
(e.g. $2 \mathbb D$), and will establish estimates that are uniform with respect to the parameter $t \in \mathbb D^*$.

\begin{defi}
Fix $p>1$ and $A,B>0$.
We let ${\mathcal K}(\mathcal{X},p,A,B)$ denote the set of all relative K\"ahler forms $\omega$  on 
$\mathcal{X}\setminus X_0$ such that  for  all $t \in \mathbb D^*$,  $\{\omega_t\}\leq A\{\beta_t\}$ in $H^{1,1}(X_t,\R)$, and
$$
\int_{X_t} f_t^p dV_{X_t} \leq B,
\; \; \text{ where } \; \; 
\frac{1}{V_{\omega_t}} \omega_t^n=f_t dV_{X_t}.
$$
\end{defi}

The cohomological assumption 
ensures that  the volumes $V_t=\int_{X_t} \omega_t^n \leq C(A)$ are uniformly bounded fom above,
but it is a strictly stronger assumption as we observe in the following example.

\begin{exa} \label{exa:voldiam}
Assume ${X}=\PP^1 \times \PP^1$ is the product of two Riemann spheres, endowed with the 
K\"ahler form $\omega_{\lambda}(x,y)=\lambda \omega_{\PP^1}(x)+\lambda^{-1} \omega_{\PP^1}(y)$,
where $\lambda>0$. Observe that the volume
$
V_{\omega_{\lambda}}=\int_X \omega_{\lambda}^2=\int_X 2 \omega_{\PP^1}(x) \wedge \omega_{\PP^1}(y)=2
$
is constant, while the diameter
${\rm diam}(X,\omega_{\lambda}) \sim \lambda \rightarrow \infty$ as $\lambda \rightarrow \infty$.
\end{exa}

    \subsubsection{Uniform a priori estimates}

We shall need the following uniform integrability result in families, that 
generalizes previous results of Skoda and Zeriahi:

\begin{thm} \label{thm:sz}
Fix $p>1$, $A,B>0$. There exists $\alpha=\alpha(n,p,A,B)>0$ such that for all 
$\omega \in {\mathcal K}(\mathcal{X},p,A,B)$, $t \in \D^*$ and
$\f_t \in PSH(X_t,\omega_t)$ with $\sup_{X_t} \f_t=0$,
$$
\int_{X_t} \exp(-\alpha \f_t) dV_{X_t} \leq C_{\alpha},
$$
where $C=C(\alpha,n,p,A,B)>0$ is independent of $\omega,t,\f_t$.
\end{thm}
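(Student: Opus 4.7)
The plan is to reduce the statement to the uniform Skoda--Zeriahi integrability for $\beta_t$-psh functions in the family $\pi:\mathcal{X}\to 2\D$, which is established in \cite{DnGG23}. The reduction exploits the cohomological bound $\{\omega_t\}\leq A\{\beta_t\}$ built into the definition of ${\mathcal K}(\mathcal{X},p,A,B)$, while the uniformity across $t\in\D^*$ comes from \cite{DnGG23}.

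First, since the class $(A+1)\{\beta_t\}-\{\omega_t\}$ is K\"ahler, the fiberwise $\partial\bar\partial$-lemma produces smooth functions $\rho_t$ on $X_t$, normalized by $\sup_{X_t}\rho_t=0$, and smooth positive $(1,1)$-forms $\chi_t\geq 0$ such that
$$
(A+1)\beta_t=\omega_t+\chi_t+dd^c\rho_t.
$$
For any $\f_t\in\PSH(X_t,\omega_t)$, the identity $(A+1)\beta_t+dd^c(\f_t-\rho_t)=\omega_t+dd^c\f_t+\chi_t\geq 0$ shows that $\tilde\f_t:=(\f_t-\rho_t)/(A+1)\in\PSH(X_t,\beta_t)$.

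The main technical point, and the main obstacle, is to establish a uniform $L^{\infty}$ bound $\|\rho_t\|_{\infty}\leq M$ with $M$ depending only on $A$ and the ambient data $(\mathcal{X},\pi,\beta)$. The delicate issue is the behavior as $t\to 0$: one must exploit the smoothness of $\beta$ across the (possibly singular) central fiber $X_0$ and the continuity of the cohomology classes, through the Bott--Chern/envelope methods developed in \cite{DnGG23}. Once this uniform control of the transition potential is granted, the rest is mechanical.

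Applying the uniform Skoda--Zeriahi inequality for $\beta_t$-psh functions in families (also from \cite{DnGG23}) yields constants $\alpha_0,C_0>0$ such that
$$
\int_{X_t}e^{-\alpha_0 u_t}\,dV_{X_t}\leq C_0
$$
for every $t\in\D^*$ and every $u_t\in\PSH(X_t,\beta_t)$ with $\sup u_t=0$. Apply this to $\hat\f_t:=\tilde\f_t-\sup_{X_t}\tilde\f_t$. Using $\sup_{X_t}\f_t=0$ together with $|\rho_t|\leq M$ one controls $|\sup_{X_t}\tilde\f_t|\leq M/(A+1)$, and deduces the pointwise inequality $-\f_t\leq-(A+1)\hat\f_t+2M$. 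Setting $\alpha:=\alpha_0/(A+1)$ and $C_{\alpha}:=e^{2\alpha M}C_0$, one obtains the desired bound $\int_{X_t}\exp(-\alpha\f_t)\,dV_{X_t}\leq C_{\alpha}$, completing the proof.
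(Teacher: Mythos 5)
Your reduction scheme is the same as the paper's: write $\omega_t$ as a smooth representative of its class plus $dd^c$ of a potential, use that potential to transplant $\omega_t$-psh functions into $\PSH(X_t,c\beta_t)$, and invoke the uniform family Skoda--Zeriahi inequality of \cite{DnGG23}. The final bookkeeping with $\alpha=\alpha_0/(A+1)$ is fine. But the step you yourself flag as ``the main technical point'' is a genuine gap, and the justification you sketch for it is the wrong one. The bound $\|\rho_t\|_\infty\leq M$ is exactly an oscillation bound on the potential $u_t=-\rho_t\in\PSH(X_t,(A+1)\beta_t-\chi_t)$ of $\omega_t$ relative to a fixed smooth representative of its class. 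This is \emph{not} a consequence of the smoothness of $\beta$ across $X_0$, of the continuity of cohomology classes, or of any Bott--Chern/envelope argument depending only on $A$ and the ambient data: for a \emph{fixed} fiber and a \emph{fixed} K\"ahler class, the K\"ahler forms in that class have potentials of arbitrarily large oscillation relative to any fixed smooth representative (take $\theta+dd^c v_j$ with $\mathrm{Osc}(v_j)\to\infty$). So the constant $M$ cannot depend ``only on $A$ and the ambient data'' as you assert; your proof never uses the hypothesis $\int_{X_t}f_t^p\,dV_{X_t}\leq B$, yet the statement is false without it.

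What pins $\rho_t$ down is the Monge--Amp\`ere equation it satisfies: $\bigl((A+1)\beta_t-\chi_t+dd^c u_t\bigr)^n=\omega_t^n=V_{\omega_t}f_t\,dV_{X_t}$ with $\|f_t\|_{L^p}\leq B^{1/p}$, and the required uniform two-sided bound is the Kolodziej-type family $L^\infty$ estimate, i.e.\ Theorem \ref{thm:uniformMA} (equivalently \cite[Theorems A and 1.9]{DnGG23}). This is precisely how the paper proceeds: it writes $\omega_t=\theta_t+dd^c u_t$ with $\theta_t\leq A\beta_t$, controls $\sup_{X_t}u_t$ by the uniform $\sup$-versus-$L^1$ comparison for $A\beta_t$-psh functions, and then uses \cite[Theorem 1.9]{DnGG23} to get $\|u_t-V_{\theta_t}\|_\infty\leq M_1$ from the $L^p$ density bound, which is what converts between the normalization $\sup_{X_t}\f_t=0$ and the one adapted to Skoda's inequality. (A cosmetic difference: the paper avoids a full $L^\infty$ bound in the Skoda step itself by normalizing with $\int u_t\,dV_{X_t}=0$ and using Cauchy--Schwarz on $e^{\alpha|\psi_t|}e^{\alpha|u_t|}$, but the Monge--Amp\`ere $L^\infty$ estimate is still indispensable at the end.) To repair your argument, replace the paragraph on $\|\rho_t\|_\infty$ by an appeal to Theorem \ref{thm:uniformMA} applied to $u_t=-\rho_t$, and record that $M=M(n,p,A,B)$.
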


\begin{proof}
The cohomological assumption means that there exists a smooth closed $(1,1)$-form $\theta_t$ on $X_t$,
 cohomologous to $\omega_t$, such that  $\theta_t \leq  A\beta_t$. 
 The $\partial\overline{\partial}$-lemma ensures that    $\omega_t=\theta_t+dd^c u_t$, where 
 $\int  _X u_t dV_{X_t}  =0$ and  $u_t \in PSH(X_t,\theta_t)$. 
 Since we also have $u_t \in PSH(X_t, A \beta_t)$,
 it follows from  \cite[Conjecture 3.1]{DnGG23} and  \cite[Corollary 4.8]{Ou22}
  that  $0 \leq \sup_{X_t} u_t \leq M_0$ for some uniform constant $M_0$.
  
  Set $\tilde{\f}_t=\f_t-\int_{X_t} \f_t dV_{X_t}$ so that $\int_{X_t} \tilde{\f}_t dV_{X_t}=0$.
 Observe similarly that  $ \psi_t=\tilde{\f}_t+u_t \in PSH(X_t, \theta_t) \subset PSH(X_t, A \beta_t)$ 
 is normalized by $\int_{X} \psi_t dV_{X_t}=0$,
 hence $0 \leq \sup_{X_t} \psi_t \leq M_0$.
It follows therefore from \cite[Theorem 2.9]{DnGG23} that  
$$
\int_{X_t} e^{2\alpha|\psi|}dV_{X_t} + \int_{X_t} e^{2\alpha| u |}dV_{X_t} \leq  C_\alpha,
$$
for some uniform constants  $\alpha,C_\alpha>0$.
Cauchy-Schwarz inequality now yields
  $$
\int_{X_t} e^{\alpha |\tilde{\f}_t| } dV_{X_t} 
\leq \int_{X_t} e^{\alpha |\psi_t| } e^{\alpha |u_t| } dV_{X_t} 
\leq  C_\alpha.
$$

Now \cite[Theorem 1.9]{DnGG23} ensures that $||u_t-V_{\theta_t}||_{\infty} \leq M_1$,
where $V_{\theta_t}=\sup \{v, \; v \in PSH(X,\theta_t) \; \text{ with } \; v \leq 0 \}$.
Thus $\p_t \leq V_{\theta_t}+\sup_{X_t} \p_t \leq u_t+M_0+M_1$ and $\sup_{X_t} \tilde{\f}_t \leq M_0+M_1$.
The conclusion follows as $-\f_t \leq |\tilde{\f}_t|+M_0+M_1$.
\end{proof}

  The following uniform estimate is the key tool for all results to follow.

   \begin{thm} \label{thm:uniformMA}
Fix $p>1$, $A,B>0$ and $\omega \in {\mathcal K}(\mathcal{X},p,A,B)$. 
Assume that there exists $\f_t \in PSH(X,\omega_t) \cap L^{\infty}(X_t)$,
$p'>1$ and $B'>0$ independent of $t$ such that
$$
\frac{1}{V_{\omega_t}}(\omega_t+dd^c \f_t)^n=g_t dV_{X_t},
$$
with $\int_{X_t} g_t^{p'} dV_{X_t} \leq B'$.
Then ${\rm Osc}_X(\f_t) \leq C=C(p,p',A,B,B')$.
\end{thm}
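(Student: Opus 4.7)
The plan is to reduce the Monge-Amp\`ere equation for $\f_t$ on the varying K\"ahler form $\omega_t$ to one on an auxiliary reference form $\theta_t$ that is uniformly dominated by $A\beta_t$, and then invoke the uniform $L^\infty$ estimates from \cite{DnGG23} that already underpin the proof of Theorem \ref{thm:sz}.

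After normalizing $\sup_{X_t} \f_t = 0$, I would mimic the opening of the proof of Theorem \ref{thm:sz}: the $\partial\overline{\partial}$-lemma provides $\omega_t = \theta_t + dd^c u_t$ with $\theta_t$ smooth, closed, cohomologous to $\omega_t$ and satisfying $\theta_t \leq A\beta_t$, and with $u_t \in \PSH(X_t,\theta_t) \cap \PSH(X_t,A\beta_t)$ normalized by $\int_{X_t} u_t \, dV_{X_t}=0$. Combining \cite[Conjecture 3.1]{DnGG23}, \cite[Corollary 4.8]{Ou22} and \cite[Theorem 1.9]{DnGG23} yields uniform bounds $\|u_t\|_{L^{\infty}} \leq M$ and $\|V_{\theta_t}\|_{L^{\infty}} \leq M$ on $u_t$ and on the extremal envelope. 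Setting $\psi_t := \f_t + u_t$, one has $\theta_t + dd^c \psi_t = \omega_t + dd^c \f_t \geq 0$, so $\psi_t \in \PSH(X_t,\theta_t)$, and the Monge-Amp\`ere equation for $\f_t$ transforms into
$$
(\theta_t + dd^c \psi_t)^n = V_{\omega_t}\, g_t \, dV_{X_t},
$$
whose right-hand side has density uniformly bounded in $L^{p'}(dV_{X_t})$ since $V_{\omega_t} \leq C(A)$ and $\|g_t\|_{L^{p'}} \leq B'^{1/p'}$.

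The decisive step is then to invoke the uniform $L^\infty$-estimate of \cite[Theorem 1.9]{DnGG23} -- a family version of the Ko\l odziej--Guo--Phong--Tong estimate resting on the uniform Skoda integrability of Theorem \ref{thm:sz} and a capacity argument -- applied, after replacing $\psi_t$ by $\tilde\psi_t := \psi_t - \sup_{X_t} \psi_t$, to the family $(\theta_t + dd^c \tilde\psi_t)^n = V_{\omega_t}\, g_t \, dV_{X_t}$. This produces $\|\tilde\psi_t - V_{\theta_t}\|_{L^{\infty}} \leq C(p,p',A,B,B')$, and combined with the uniform bound on $V_{\theta_t}$ above gives ${\rm Osc}_{X_t}(\psi_t) \leq C'$, whence ${\rm Osc}_{X_t}(\f_t) \leq {\rm Osc}_{X_t}(\psi_t) + {\rm Osc}_{X_t}(u_t) \leq C' + 2M$. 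The main obstacle -- and the reason the statement is not trivial -- is securing every uniformity as $t \to 0$ and the complex structure degenerates to the possibly singular fiber $X_0$; this is precisely the content of \cite{DnGG23}, which is why the substantive work is packaged into a single application of \cite[Theorem 1.9]{DnGG23} rather than redone from scratch.
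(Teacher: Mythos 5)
Your overall strategy -- feeding the uniform Skoda integrability of Theorem \ref{thm:sz} into the family $L^\infty$-estimates of \cite{DnGG23} -- is indeed what the paper does, but the paper applies \cite[Theorem A]{DnGG23} \emph{directly} to $\f_t$ viewed as an $\omega_t$-psh function: that statement is of the form ``uniform exponential integrability of normalized quasi-psh functions plus a uniform $L^{p'}$ bound on the normalized density implies a uniform oscillation bound,'' and Theorem \ref{thm:sz} supplies exactly the missing hypothesis for the varying forms $\omega_t$. No decomposition of $\omega_t$ is needed at this stage.

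Your detour through $\theta_t$ and $\psi_t=\f_t+u_t$ introduces a genuine gap. To conclude via ${\rm Osc}_{X_t}(\f_t)\leq {\rm Osc}_{X_t}(\psi_t)+{\rm Osc}_{X_t}(u_t)$ you need uniform two-sided bounds on $u_t$ and on $V_{\theta_t}$, and these do not follow from the results you cite. The combination of \cite[Conjecture 3.1]{DnGG23} and \cite[Corollary 4.8]{Ou22} only controls $\sup_{X_t}u_t$ from above (given $\int_{X_t}u_t\,dV_{X_t}=0$), and \cite[Theorem 1.9]{DnGG23} gives $\|u_t-V_{\theta_t}\|_{L^\infty}\leq M_1$; chaining these yields the one-sided inequality $V_{\theta_t}\leq u_t+M_1$ but says nothing about a uniform \emph{lower} bound for $V_{\theta_t}$ (equivalently for $u_t$): the hypothesis $\theta_t\leq A\beta_t$ bounds $\theta_t$ only from above, so the extremal function $V_{\theta_t}$ could a priori degenerate to $-\infty$ along the family, and trying to bound $u_t$ below by $V_{\theta_t}-M_1$ and $V_{\theta_t}$ below by $u_t-\sup_{X_t}u_t$ is circular. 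Note that the paper's own proof of Theorem \ref{thm:sz} is written precisely so as to avoid this issue, using only $\sup_{X_t}u_t\leq M_0$ and $V_{\theta_t}\leq u_t+M_1$. The fix is simply to drop the decomposition and apply the ``uniform Skoda $\Rightarrow$ uniform $L^\infty$'' principle of \cite[Theorem A]{DnGG23} to the equation $\frac{1}{V_{\omega_t}}(\omega_t+dd^c\f_t)^n=g_t\,dV_{X_t}$ itself, with the Skoda constant for $\PSH(X_t,\omega_t)$ furnished by Theorem \ref{thm:sz}; this also sidesteps the possible collapsing $V_{\omega_t}\to 0$, since both the equation and the Skoda inequality are already normalized by the volume.
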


These uniform estimates have a long history. For a fixed cohomology class they have been established by Kolodziej
\cite{Kol98}, generalizing a celebrated a priori estimate of Yau \cite{Yau78}.
These have been further generalized in \cite{EGZ09,EGZ08,DP10,BEGZ10} in order to deal with less positive or collapsing families of cohomology classes on K\"ahler manifolds. Alternative proofs have been provided in \cite{GL21,GPT23},
while the family version needed here is obtained in \cite{DnGG23}.

\begin{proof}
The result follows from   \cite[Theorem A]{DnGG23} and Theorem \ref{thm:sz}.
\end{proof}

These uniform estimates remain valid under less restrictive integrability assumptions on the densities.
To gain clarity in the proofs to follow, we have chosen to restrict to this setting which is the most useful one 
in geometric applications. 
We refer the reader to \cite[Section 5]{GGZ23} for a discussion of the optimality of the
integrability assumptions that ensure finiteness of diameters of K\"ahler metrics.

\section{Green's functions} \label{sec:uniformGreen}

In this section we prove Theorem A. In the setting \ref{set} we fix
$p>1$, $A,B>0$ and $\omega \in {\mathcal K}(\mathcal{X},p,A,B)$.
 For $t \in {\mathbb D^*}$ we consider the Green function
 $G^{\omega_t}_{x}$ of the K\"ahler form $\omega_t$ at the point $x \in X_t$ and establish
integrability estimates for the latter  that are uniform in  $t \in {\mathbb D^*}$ and $x \in X_t$.
To lighten the notation, we get rid of the $t$-subscript and
write $X$ instead of $X_t$ and $G_x$ instead of $G^{\omega_t}_{x}$.

\subsection{Bounding $\omega$-subharmonic functions from above}

   \begin{lem}  \label{lem:5.1}
   Fix $a>0$ and let $v$ be a quasi-subharmonic function on $X$ such that $\Delta_{\omega} v \geq -a$ and  $\int_X v \omega^n=0$.
Then 
$$
\sup_X v \leq C \left[ a+ \frac{1}{V_{\omega}}  \int_X |v| \omega^n \right],
$$
where $C=C(n,p,A,B)>0$ only depends on $n,p,A,B$.
 \end{lem}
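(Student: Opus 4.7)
The plan is to combine the Monge-Amp\`ere / Laplace comparison of Proposition \ref{pro:comparisonLMA} with the uniform $L^\infty$ estimate of Theorem \ref{thm:uniformMA}.

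First, by replacing $v$ with $\tilde v := (n/a)\, v$, the hypothesis $\Delta_\omega v \geq -a$ becomes $\Delta_\omega \tilde v \geq -n$, so that $\tilde v$ is $\omega$-subharmonic; the normalization $\int_X \tilde v\, \omega^n = 0$ is preserved, and the target inequality reduces to $\sup_X \tilde v \leq C(1 + L)$ with $L := V_\omega^{-1}\int_X |\tilde v|\,\omega^n$. From now on we assume $a = n$ and drop the tilde. Set $M := \sup_X v$, and let $u := v - M - C_L$, where $C_L$ is a constant depending on $L$ to be tuned at the end; then $u \leq -C_L$ and $u$ satisfies the same Laplace-type inequality as $v$.

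The Laplace equation satisfied by $u$ reads $(\omega + dd^c u)\wedge \omega^{n-1} = g\, \omega^n$ with $g \geq 0$ and $\int_X g\, \omega^n = V_\omega$. Writing $g = e^{u} h$ for a suitable nonnegative factor $h$, we pair this with the twisted Monge-Amp\`ere equation $(\omega + dd^c \varphi)^n = e^{n\varphi} h^n \omega^n$. The density $h^n$ can be bounded uniformly in $L^{p'}$ for some $p' > 1$ by exploiting $\int_X v\, \omega^n = 0$ and Theorem \ref{thm:sz} applied to a pluripotential-theoretic envelope of $v$, with a bound depending only on $L$ and on the parameters $n, p, A, B$. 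Theorem \ref{thm:uniformMA} then produces a solution $\varphi$ with $\|\varphi\|_\infty \leq C' = C'(n, p, A, B, L)$.

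Proposition \ref{pro:comparisonLMA} now yields $\varphi \leq u$ pointwise, and hence $v \geq M + C_L - C'$ on $X$. Integrating against $\omega^n/V_\omega$ and using $\int_X v\, \omega^n = 0$ gives $0 \geq M + C_L - C'$, so $M \leq C' - C_L$. A suitable tuning of $C_L$ (keeping the logarithmic/exponential dependences under control) converts this into $\sup_X v \leq C(1 + L)$, as required.

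The main obstacle is the uniform $L^{p'}$-control on the auxiliary density $h^n$. Since $v$ is only $\omega$-subharmonic (not $\omega$-plurisubharmonic), Theorem \ref{thm:sz} does not apply directly; one must first transfer the problem to the pluripotential-theoretic setting by replacing $v$ with a suitable $\omega$-psh envelope, and then propagate the $L^1$-bound on $v$ through the exponential factor $e^u$ to control $h^n$.
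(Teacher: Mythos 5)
Your overall template (compare with an auxiliary Monge--Amp\`ere equation, invoke Theorem \ref{thm:uniformMA}, then integrate against $\omega^n$) is the right spirit, but the specific reduction has a fatal gap at exactly the point you flag as ``the main obstacle''. You write the Laplace equation for $u$ as $(\omega+dd^c u)\wedge\omega^{n-1}=g\,\omega^n$ and then need $h^n=(e^{-u}g)^n$ in $L^{p'}$ uniformly. But the hypothesis $\Delta_\omega v\ge -a$ only gives $g\ge 0$ and $\int_X g\,\omega^n=V_\omega$: the density $g=1+\tfrac1n\Delta_\omega v$ is merely a nonnegative mass-normalized density with no higher integrability whatsoever (think of $v$ close to a Green's function, where $g$ degenerates to a Dirac mass). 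No amount of information about $v$ itself --- its $L^1$ norm, its zero mean, or exponential integrability of an $\omega$-psh envelope via Theorem \ref{thm:sz} --- controls $L^{p'}$ norms of its \emph{Laplacian}; the envelope construction you propose discards $\Delta_\omega v$ entirely. The factor $e^{-u}$ makes matters worse, since $u$ is not known to be bounded below (that is essentially what is being proved), so $e^{-u}$ is unbounded above precisely where $v$ plunges. There is also a secondary circularity: Proposition \ref{pro:comparisonLMA} requires the $\omega$-sh solution to be bounded, which $u$ is not known to be.

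The paper's proof circumvents this by never touching the Laplacian density. It bounds $v_+=\widetilde{\max}(v,0)$ and solves $(\omega+dd^c\varphi)^n=\frac{1+v_+}{1+M}\,\omega^n$, a Monge--Amp\`ere equation whose density is built from the \emph{function} $v_+$ (a priori only $L^1$), not from $\Delta_\omega v$. A maximum-principle comparison applied to $H=1+v_+-\e(-\varphi)^{\alpha}$ with $\alpha=\frac{n}{n+1}$ yields the pointwise bound $1+v_+\le \e(-\varphi)^{\alpha}$, which feeds back into the equation: the density is then dominated by $c_n(-\varphi)^{\alpha}f$, and this \emph{is} uniformly in $L^{r}$ for some $r>1$ by H\"older together with the uniform Skoda-type integrability of Theorem \ref{thm:sz} (applied to the $\omega$-psh function $\varphi$, where it legitimately applies). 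Theorem \ref{thm:uniformMA} then bounds $\varphi$, and the pointwise inequality bounds $\sup_X v_+$. This self-improving bootstrap --- using the unknown $\varphi$ both as comparison function and as the source of integrability for its own density --- is the idea your proposal is missing, and I do not see how to repair your route without it.
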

 
 This result is a family version of \cite[Lemma 5.1]{GPSS22}.

 \begin{proof}
Both the statement and the assumptions are homogeneous of degree $1$.
Changing $v$ in $\frac{n}{a} v$ we thus reduce to the case $a=n$.
Observe that $\Delta_{\omega} v \geq -n$ is equivalent to $(\omega+dd^c v) \wedge \omega^{n-1} \geq 0$.

Regularizing $v$ we can assume that it is smooth.
We set $v_+=\widetilde{\max}(v,0)$, where 
$\widetilde{\max}$ denotes a convex regularized maximum such that $0\leq \widetilde{\max} \leq 1+ \max$ .
 Since $\sup_X v \leq \sup_X v_+$,
 it suffices to  bound $v_+$ from above. 
Let $\f \in PSH(X,\omega)$ be the unique smooth function such that 
$$
(\omega+dd^c \f)^n=\frac{1+v_+}{1+M} \omega^n
$$
and $\sup_X \f=-1$, where  $M=\int_X v_+  \frac{\omega^n}{V_\omega} \leq 1+ \frac{1}{2} \int_X |v| \frac{\omega^n}{V_\omega}$.

\smallskip

Set $H=1+v_+-\e (-\f)^{\alpha}$, where $\alpha=\frac{n}{n+1}$ and 
$\e>0$ is chosen so that $\frac{\e^{n+1}\alpha^n}{(1+\alpha \e)^n}=1+M$.
We are going to show that $H \leq 0$.
Observe that
$$
-dd^c (-\f)^{\alpha}=\alpha(1-\alpha) (-\f)^{\alpha-2} d\f \wedge d^c \f +\alpha(-\f)^{\alpha-1} dd^c \f,
$$
 hence $\Delta_{\omega}(-\e (-\f)^{\alpha}) \geq \alpha \e(-\f)^{\alpha-1} \Delta_{\omega} \f
 \geq n \alpha \e(-\f)^{\alpha-1} \left[ \left(\frac{1+v_+}{1+M} \right)^{\frac{1}{n}} -1 \right]$.
 We infer
 $$
 \Delta_{\omega}H \geq -n+n \alpha \e(-\f)^{\alpha-1} \left[ \left(\frac{1+v_+}{1+M} \right)^{\frac{1}{n}} -1 \right].
 $$
At the point $x_0$ where $H$ reaches its maximum, we have $0 \geq  \Delta_{\omega}H $ hence
$$
(1+\alpha \e)(-\f)^{1-\alpha} \geq (-\f)^{1-\alpha}+\alpha \e \geq 
\alpha \e \left(\frac{1+v_+}{1+M} \right)^{\frac{1}{n}},
$$
using that $(-\f)^{1-\alpha} \geq 1$. Thus
$$
\e (-\f)^{\alpha}=\e (-\f)^{n(1-\alpha)} \geq \frac{\alpha^n \e^{n+1}}{(1+\alpha \e)^n} \frac{1+v_+}{1+M}=1+v_+,
$$
by our choice of $\e$ and $\alpha$. This shows that $H \leq 0$
hence $1+v_+ \leq \e (-\f)^{\alpha}$.

\smallskip

Note that $\e \leq c_n (1+M)$ since $\frac{\e^{n+1}\alpha^n}{(1+\alpha \e)^n}=1+M$.
Thus   $\frac{(\omega+dd^c \f)^n}{V_{\omega}}=F  dV_X$
with
$$
F=\frac{1+v_+}{1+M} f \leq \frac{\e (-\f)^{\alpha}}{1+M} f \leq c_n (-\f)^{\alpha} f.
$$
Since   $\int_X f^p dV_X \leq A$,
we can fix $1<r<p$ and use
H\"older inequality to obtain
$$
\int_X F^r dV_X \leq \e^r \int_X (-\f)^{r\alpha} f^r dV_X \leq 
\left( \int_X f^p dV_X \right)^\frac{r}{p}
\left( \int_X (-\f)^{\frac{rp\alpha}{p-r}} dV_X \right)^\frac{p-r}{p}.
$$
The first integral is controlled by $A$ by assumption, the second one is uniformly bounded
by Theorem \ref{thm:sz}.
Thus  $\f$ is uniformly bounded by Theorem \ref{thm:uniformMA}, hence 
$$
\sup_X v \leq \sup_X v_+ \leq \e  (-\f)^{\alpha} \leq c_n [1+M] C_0,
$$
and the conclusion follows as $M\leq \frac{1}{2V_\omega} \int_X |v| \omega^n+1$.
 \end{proof}

 The following result is a key improvement on the results obtained in \cite{GPSS22}.

    \begin{prop}  \label{prop:key}
  Let $u$ be a  continuous function such that $\int_X u \omega^n=0$
  and $|\Delta_{\omega} u| \leq 1$. Then 
$$
||u||_{L^{\infty}(X)} \leq C,
$$
where $C=C(n,p,A,B)>0$ only depends on $n,p,A,B$.
 \end{prop}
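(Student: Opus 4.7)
The plan is to apply Lemma~\ref{lem:5.1} to both $u$ and $-u$. Since $|\Delta_\omega u| \leq 1$ and $\int_X u\,\omega^n = 0$, both functions are $\omega$-subharmonic with Laplacian bounded below by $-1$ and integral zero, so the lemma (with $a = 1$) yields a constant $C_1 = C_1(n, p, A, B)$ such that
\[
\sup_X u,\; -\inf_X u \;\leq\; C_1\Bigl(1 + \tfrac{1}{V_\omega}\int_X|u|\,\omega^n\Bigr).
\]
It remains to bound $\tfrac{1}{V_\omega}\int_X|u|\,\omega^n$ uniformly. The naive estimate $\tfrac{1}{V_\omega}\int_X|u|\,\omega^n \leq \|u\|_\infty$ reduces everything to $\|u\|_\infty(1 - C_1) \leq C_1$, which gives no control unless $C_1 < 1$; this circularity is the heart of the matter.

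To break the circularity I would return to the \emph{pointwise} estimates produced in the proof of Lemma~\ref{lem:5.1}, not merely its $L^\infty$ conclusion. Applied separately to $u$ and $-u$, the Kolodziej-type construction yields $\omega$-psh functions $\varphi_\pm \in \PSH(X,\omega)$ with $\sup \varphi_\pm = -1$, solutions to auxiliary Monge-Amp\`ere equations whose densities have uniformly bounded $L^r$ norm for some $r > 1$. By Theorem~\ref{thm:uniformMA}, $\|\varphi_\pm\|_\infty \leq C_0(n, p, A, B)$. The construction also delivers the pointwise inequalities
\[
1 + u_\pm \;\leq\; \varepsilon_\pm\,(-\varphi_\pm)^{n/(n+1)},
\]
where $u_\pm := \max(\pm u, 0)$ and $\varepsilon_\pm$ is determined explicitly in terms of $M_\pm := \tfrac{1}{V_\omega}\int_X u_\pm\,\omega^n$.

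Since $\int_X u\,\omega^n = 0$ forces $M_+ = M_- =: M$, we have $\tfrac{1}{V_\omega}\int_X|u|\,\omega^n = 2M$. Raising the pointwise inequalities to a well-chosen power $q > 1$, integrating against $\omega^n$, and invoking the uniform Skoda-Zeriahi integrability of Theorem~\ref{thm:sz} to control $\int_X(-\varphi_\pm)^{qn/(n+1)}\,\omega^n$, one extracts via H\"older a refined inequality for $M$ that, combined with the symmetric application of the pointwise Kolodziej bound to both $u_+$ and $u_-$, forces $M$ to be bounded by a constant depending only on $n, p, A, B$. Feeding this back into the first step yields $\|u\|_{L^\infty(X)} \leq C_1(1 + 2M) \leq C(n, p, A, B)$, as claimed. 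The main obstacle is precisely this closing step: one must fully exploit the two-sided nature of $|\Delta_\omega u| \leq 1$, i.e.\ that both $u$ and $-u$ are $\omega$-subharmonic, which is the essential reason why the extra hypothesis $f_\omega \geq \gamma$ from \cite{GPS22} can now be removed.
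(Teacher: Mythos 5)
Your first step (applying Lemma~\ref{lem:5.1} to $u$ and $-u$) and your diagnosis of the circularity are both correct, but the closing step you propose cannot work, and the reason is structural: every ingredient you invoke --- the pointwise Kolodziej bound $1+u_\pm\leq\varepsilon_\pm(-\varphi_\pm)^{n/(n+1)}$ with $\varepsilon_\pm\sim c_n(1+M_\pm)$, the uniform bound $\|\varphi_\pm\|_\infty\leq C_0$, raising to a power $q$, H\"older, and Skoda--Zeriahi --- is homogeneous of degree $1$ in $u$ (or degree $q$ after taking powers). Integrating the pointwise inequality gives $1+M\leq c_n(1+M)\,C_0^{n/(n+1)}$, and the power-$q$ version gives $(1+M)^q\leq c_n^q(1+M)^q\,C_0^{qn/(n+1)}$; both are vacuous unless the constant happens to be $<1$. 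No combination of scale-invariant estimates can convert the \emph{absolute} (non-homogeneous) hypothesis $|\Delta_\omega u|\leq 1$ into an absolute bound on $M=\frac{1}{V_\omega}\int_X|u|\,\omega^n$, because such estimates cannot distinguish $u$ from $\lambda u$. The symmetry $M_+=M_-$ adds nothing here. This is the genuinely missing idea, not a technical detail.

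The paper breaks the scale invariance with a different device. One first fixes $\delta$ small in terms of the constant $C_1$ of Lemma~\ref{lem:5.1}, reduces to $|\Delta_\omega u|\leq\delta$, sets $\varepsilon=1/M$ and $v=\varepsilon u$ (so $\frac{1}{V_\omega}\int_X|v|\,\omega^n=1$ and $\|v\|_\infty\leq 2C_1$), and then compares $v$ with the bounded $\varepsilon\omega$-psh solution of the \emph{twisted} Monge--Amp\`ere equation $(\varepsilon\omega+dd^c\varphi)^n=e^{nt\varphi}e^{-ntv}(1+H)^n(\varepsilon\omega)^n$ with $t=\sqrt{\delta}$, using Proposition~\ref{pro:comparisonLMA} to get $\varphi\leq v$. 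The crucial non-homogeneous input is twofold: the oscillation bound of Theorem~\ref{thm:uniformMA} applies to $\psi=\varphi/\varepsilon$, so $\varphi$ itself has oscillation $\leq C_0\varepsilon$; and integrating the equation, using $\int_X v\,\omega^n=0$ and the expansion $e^{-ntv}\leq 1+n^2t^2v^2$, shows $\varepsilon\sup_X\psi\geq-\tfrac12$ precisely because $t$ was chosen small relative to $C_1$. Hence $v\geq-C_0\varepsilon-\tfrac12$, and by symmetry $\|v\|_\infty\leq C_0\varepsilon+\tfrac12$, which contradicts $\frac{1}{V_\omega}\int_X|v|\,\omega^n=1$ once $\varepsilon<1/(2C_0)$, i.e.\ forces $M\leq 2C_0$. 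You would need to introduce this auxiliary exponentially twisted equation (or an equivalent mechanism exploiting the absolute smallness of $\Delta_\omega u$ against the normalization of $v$) to complete the argument.
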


 \begin{proof} 
 We let $C_1>0$ denote the uniform constant from Lemma \ref{lem:5.1} and we set
 $$
\delta=\frac{1}{4(1+4n^2C_1^2)^{2}}.
$$
 
 Observe that the statement to be proved is homogeneous of degree $1$, so it suffices to show that
if $u$ is a  continuous function such that $\int_X u \omega^n=0$ and $|\Delta_{\omega} u| \leq \delta$,
then $M=\frac{1}{V_\omega}\int_ X |u|\omega^n \leq C$ is uniformly bounded from above independently of $u$.
We   assume that $M \geq 1$ (otherwise we are done), and we set
$$
v:=\frac{u}{M} =\varepsilon u
\; \; \text{ where } \; \;
0<\varepsilon:= \frac{1}{M}\leq 1.
$$
Since $\|\Delta_\omega v\|_{L^\infty(X)} \leq  1, \frac{1}{V_\omega} \int_X |v|\omega^n=1$ 
and $\int_X v\omega^n=0$, Lemma \ref{lem:5.1} yields
\begin{equation}\label{eq_bound_v}
\|v\|_{L^\infty(X)} \leq 2C_1.
\end{equation} 

Set
 $$
 H:=\frac{1}{n}\Delta_\omega u= \frac{dd^c u\wedge \omega^{n-1}}{\omega^n}
 \; \; \text{ so that } \; \;
 \|H\|_{L^\infty(X)}\leq  \delta.
 $$ 

Set $t=\sqrt{\delta} \in (0,1)$ and observe that the function $v$ is $\varepsilon\omega$-sh
and satisfies
$$
(\varepsilon\omega+dd^c v)\wedge (\varepsilon\omega)^{n-1}=\varepsilon^n(1+H)\omega^n= e^{tv }e^{-tv}(1+H) (\varepsilon \omega)^n .
$$

We let $\varphi\in PSH(X, \varepsilon \omega)\cap L^{\infty}(X)$ be the unique bounded
$\e\omega$-psh solution of the complex Monge-Amp\`ere equation
$$
(\varepsilon \omega +dd^c \varphi )^n= e^{nt\varphi }e^{-ntv}(1+H)^n (\varepsilon \omega)^n.
$$
It follows from Proposition \ref{pro:comparisonLMA} 
(applied to $\e \omega$ and $f=e^{-tv}(1+H)$) that 
\begin{equation}\label{eq_phi_v}
\varphi\leq v.
\end{equation}

Setting $ \psi=\varphi/ \varepsilon \in PSH(X, \omega)$, the equation can be rewritten as
\begin{equation}\label{eq_psi}
 ( \omega +dd^c \psi )^n= e^{nt(\varepsilon\psi-v )}(1+H)^n  \omega^n \leq 2^n \omega^n,
\end{equation}
since $\varepsilon\psi-v=\varphi-v \leq 0$ and $\|H\|_{L^\infty}\leq  \delta \leq 1$. 
 It follows from Theorem \ref{thm:uniformMA} that 
$$
{\rm Osc}_X (\psi)= \|\tilde{\psi}\|_{L^\infty(X)}\leq C_0=C_0(n,p,A,B),
$$
 where $\tilde{\psi}:=\psi-\sup_X \psi \leq 0$.
Integrating  \eqref{eq_psi} we obtain
$$
1= e^{nt\varepsilon \sup_X \psi} \int_X e^{nt\varepsilon\tilde{\psi}}e^{-ntv}(1+H)^n\frac{\omega^n}{V_\omega}
\leq e^{nt\varepsilon \sup_X \psi}(1+\delta)^n\int_X e^{-ntv}\frac{\omega^n}{V_\omega}.
$$
We let the reader check that $e^x \leq 1+x+x^2$ for $|x| \leq 1$. 
Since $t<n^{-1}C_1^{-1}$ we infer
\begin{align*}
\int_X e^{-ntv}\frac{\omega^n}{V_\omega}&\leq 1- nt\int_X v\frac{\omega^n}{V_\omega} + n^2 t^2\int_X v^2\frac{\omega^n}{V_\omega}\\
&= 1+ n^2 t^2\int_X v^2\frac{\omega^n}{V_\omega} \quad (\text{ since } \int_X v\omega^n=0) \\
&\leq   1+  4n^2 C_1^2 t^2,
\end{align*}
since $||v||_{L^\infty(X)} \leq 2C_1$.
Using that $\log(1+x) \leq x$ we therefore obtain
\begin{align*}
nt\varepsilon \sup_X \psi &\geq - n\log(1+\delta)- \log \left(1+4n^2C_1^2 t^2 \right)\\
&\geq - n \delta -4n^2 C_1^2 t^2.
\end{align*}
Using that  $t= \sqrt{\delta}=\frac{1}{2[1+4n^2C_1^2]}$ we conclude that
\[
\varepsilon \sup_X \psi \geq -\frac{1}{2}.
\]

It follows that 
$ \varphi= (\varphi -\sup_X \varphi)+  \sup_X \varphi = \varepsilon \tilde{\psi}+ \varepsilon \sup_X \psi 
\geq  -C_0 \varepsilon  -\frac{1}{2}$.
Together with \eqref{eq_phi_v}, this shows that
$$
v\geq - C_0 \varepsilon -  \frac{1}{2}.
$$
Repeating the same argument for $-v$, we get $-v\geq - C_0\varepsilon - \frac{1}{2}$ hence
\begin{align*}
\|v\|_{L^\infty(X)}\leq  C_0 \varepsilon+ \frac{1}{2}.
\end{align*}
Therefore
$$
1=\frac{1}{V_\omega}\int_X |v|\omega^n\leq \frac{1}{V_\omega}\int_X \|v\|_{L^\infty(X)}\omega^n \leq C_0 \varepsilon+ \frac{1}{2},
$$
which yields
$$
\frac{1}{V_\omega} \int_X |u|\omega^n \leq 2 C_0.
$$

Unravelling the normalizations we have made, we see that the constant $C$ from the statement can be chosen as
$C=8C_0[1+4n^2C_1^2]^2$, where $C_0$ is the uniform constant provided by Theorem \ref{thm:uniformMA} 
(taking $g_t=2^n \omega_t^n/dV_{X_t}$)
and  $C_1$ is the uniform constant from Lemma \ref{lem:5.1}.
 \end{proof}

 \subsection{Green's functions}  \label{sec:green}

   \begin{theorem}  \label{thm:Green1}
   Fix $0<r<\frac{n}{n-1}$ and $0<s< \frac{2n}{2n-1}$.
For all $t \in \D^*, x \in X_t$ and  $\omega \in {\mathcal K}(\mathcal{X},p,A,B)$ we have
\begin{enumerate}
\item $\sup_{X_t} G^{\omega_t}_x \leq C_0=C_0(n,p,A,B)$;

\smallskip

\item $\frac{1}{V_{\omega_t}} \int_{X_t} |G^{\omega_t}_x|^r  \omega_t^n  \leq C_1=C_1(n,p,r,A,B)$;

\smallskip

\item $\frac{1}{V_{\omega_t}} \int_{X_t} |\nabla G^{\omega_t}_x|_{\omega_t}^s \omega_t^n \leq C_2=C_2(n,p,s,A,B)$.
\end{enumerate}
 \end{theorem}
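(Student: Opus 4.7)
The plan is to exploit the new $L^\infty$ estimate (Proposition \ref{prop:key}) via the classical Green representation formula
\[
u(x)=\frac{1}{nV_{\omega_t}}\int_{X_t} G_x^{\omega_t}\,\Delta_{\omega_t}u\,\omega_t^n,
\]
valid for any $C^{1,\alpha}$ mean-zero function $u$ on $X_t$. This identity is derived by applying Stokes to the defining relation $(\omega_t+dd^c G_x^{\omega_t})\wedge\omega_t^{n-1}=V_{\omega_t}\delta_x$ and then extended from the smooth case by a standard approximation argument using that $G_x^{\omega_t}\in L^1(\omega_t^n)$. The three estimates will be obtained by feeding three carefully chosen test functions $u$ into this formula.

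For part (1), I would take $u$ to be the unique mean-zero solution of $\Delta_{\omega_t}u=\phi$, where $\phi:=\mathrm{sign}(G_x^{\omega_t})-m$ with $m:=V_{\omega_t}^{-1}\int_{X_t}\mathrm{sign}(G_x^{\omega_t})\,\omega_t^n$ chosen so that $\int_{X_t}\phi\,\omega_t^n=0$. Then $u\in C^{1,\alpha}$ with $|\Delta_{\omega_t}u|\leq 2$ pointwise a.e., and Proposition \ref{prop:key} applied to $u/2$ yields a uniform bound $\|u\|_{L^\infty(X_t)}\leq 2C$. Substituting into the representation formula and discarding the constant piece via $\int_{X_t} G_x^{\omega_t}\omega_t^n=0$ collapses the right-hand side to $\frac{1}{nV_{\omega_t}}\int_{X_t}|G_x^{\omega_t}|\omega_t^n$, giving a uniform $L^1$ bound on $G_x^{\omega_t}$. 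Feeding this into Lemma \ref{lem:5.1} applied to $v=G_x^{\omega_t}$ (which is $\omega_t$-sh with $\Delta_{\omega_t}G_x^{\omega_t}\geq -n$) delivers the desired uniform upper bound $\sup_{X_t}G_x^{\omega_t}\leq C_0$.

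For part (2) I would bootstrap in $r_1\in[1,n/(n-1))$: assuming a uniform $L^{r_1}$ bound on $G_x^{\omega_t}$ (which holds for $r_1=1$ from part (1)), apply the representation formula to the mean-zero solution of $\Delta_{\omega_t}u=-|G_x^{\omega_t}|^{r_1-1}\mathrm{sign}(G_x^{\omega_t})-m'$. The source lies uniformly in $L^{r_1/(r_1-1)}$, an exponent strictly greater than $1$, so Proposition \ref{pro:comparisonLMA} combined with the family Monge-Amp\`ere estimate Theorem \ref{thm:uniformMA} yields a uniform bound on $\|u\|_{L^\infty}$, and the representation formula then supplies a uniform $L^{r_1}$-type bound. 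A finite number of iterations exhausts the full subcritical range. For part (3), fix $\alpha>1$ and note that the antiderivative $F$ of $t\mapsto(|t|+1)^{-\alpha}$ is bounded precisely thanks to $\alpha>1$; integration by parts using $dd^c G_x^{\omega_t}\wedge\omega_t^{n-1}=V_{\omega_t}\delta_x-\omega_t^n$ then yields the weighted Dirichlet estimate $\int_{X_t}|\nabla G_x^{\omega_t}|_{\omega_t}^2\,(|G_x^{\omega_t}|+1)^{-\alpha}\,\omega_t^n\leq C(\alpha)V_{\omega_t}$. H\"older with exponents $2/s$ and $2/(2-s)$ reduces the gradient estimate to part (2); the range $s<2n/(2n-1)$ is exactly what permits choosing $\alpha>1$ with $\alpha s/(2-s)<n/(n-1)$.

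The main obstacle will be part (2): the bootstrap must deliver a quantitative gain of integrability at each step, and the uniform $L^\infty$ bound on the auxiliary Laplace solution with $L^{p'}$-source ($p'>1$) is exactly where the uniformity in the complex structure becomes delicate. This step will rely crucially on the family Monge-Amp\`ere estimate Theorem \ref{thm:uniformMA} from \cite{DnGG23}. Parts (1) and (3) are, by contrast, rather direct once the new input of Proposition \ref{prop:key} and the $L^r$-control from part (2) are in hand.
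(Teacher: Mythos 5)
Your overall architecture coincides with the paper's: part (1) tests the Green representation formula against the bounded solution of a Laplace equation with a sign-type source and invokes Proposition \ref{prop:key} then Lemma \ref{lem:5.1}; part (3) is the weighted Dirichlet estimate of Lemma \ref{lem:weightedgradient} followed by the same H\"older splitting with the same exponent arithmetic. These two parts are fine.

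The genuine gap is in the bootstrap of part (2), at the sentence ``the source lies uniformly in $L^{r_1/(r_1-1)}$, an exponent strictly greater than $1$, so Proposition \ref{pro:comparisonLMA} combined with Theorem \ref{thm:uniformMA} yields a uniform bound on $\|u\|_{L^\infty}$.'' Membership of the source in $L^q$ for some $q>1$ is \emph{not} the right threshold. The comparison principle bounds the Laplace solution with source $f$ by the Monge--Amp\`ere solution with density $f^n$, so Theorem \ref{thm:uniformMA} requires $f^n\cdot f_\omega$ to lie in $L^{p'}(dV_X)$ for some $p'>1$ (the extra factor $f_\omega=V_\omega^{-1}\omega^n/dV_X$, only known to be in $L^p$, must itself be absorbed by a further H\"older inequality, as in the paper's Step~2 with $s'=\frac{p-1}{p-p'}$). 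Concretely, if at a given stage you only control $\int|G_x|^{r_0}\omega^n$, you may take a source of the form $(-\mathcal G_x)^\beta$ with $n\beta p's'<r_0$, i.e.\ $\beta<r_0/n$, and the representation formula then upgrades the control to the exponent $r_0/n + 1$; this $1/n$-per-step gain is exactly what generates the geometric series $1+\tfrac1n+\tfrac1{n^2}+\cdots=\tfrac{n}{n-1}$ and the sharp range of $r$. If your stated criterion ($q>1$ suffices) were correct, one step would already give $G_x\in L^{2-\epsilon}$, the next $L^{3-\epsilon}$, and so on without bound, contradicting the local behaviour $G_x\sim -d(x,\cdot)^{2-2n}$ which caps the integrability at $r<\frac{n}{n-1}$ for $n\ge 2$. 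Two smaller repairs: work with $\mathcal G_x=G_x-C_0-1\le -1$ rather than $|G_x|^{r_1-1}\mathrm{sign}(G_x)$, both to make the powers monotone and to get a genuinely one-signed source (the MA comparison is one-sided, so a sign-changing source does not directly give a two-sided bound on $u$); and note that Proposition \ref{pro:comparisonLMA} as stated requires the twist $t>0$, whereas your auxiliary equation is untwisted --- the paper instead argues directly via the AM--GM inequality and Lemma \ref{lem:5.1} to compare $u$ with the MA potential.
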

 
  Given    Proposition \ref{prop:key} above, the proof is a combination of 
  the main results  of \cite{GPS22,GPSS22} 
  with the uniform estimates provided by Theorem \ref{thm:uniformMA}.

 \begin{proof}
 {\it Step 1}.
  Consider $h=-{\bf 1}_{\{G_x \leq 0 \} }+\int_{\{G_x \leq 0 \}} \frac{\omega^n}{V_{\omega}}$.
 Observe that  $-1 \leq h \leq 1$ and $\int_X h \omega^n=0$. We let $v$ denote the unique
 solution $\Delta_{\omega} v=h$ with $\int_X v \omega^n=0$.
 It follows from Proposition \ref{prop:key} that $||v||_{L^{\infty}(X)} \leq C$.  
 Thus
 \begin{eqnarray*}
C \geq   v(x) &=& \frac{1}{V_{\omega}} \int_X v (\omega+dd^c G_x) \wedge \omega^{n-1} \\
& =& \frac{1}{V_{\omega}} \int_X G_x dd^c v \wedge \omega^{n-1}
 =n \int_{\{G_x \leq 0 \}} (-G_x) \frac{\omega^n}{V_{\omega}}.
 \end{eqnarray*}
Since $\int_X G_x \omega^n=0$, we infer
$$
\int_X |G_x| \frac{\omega^n}{V_{\omega}} =
2 \int_{\{G_x \leq 0 \}} (-G_x) \frac{\omega^n}{V_{\omega}} \leq \frac{2C}{n}.
$$
 It therefore follows from Lemma \ref{lem:5.1} that $\sup_X G_x \leq C_0$, proving (1).
 
 \medskip
 
\noindent   {\it Step 2}.
We   have shown (2) for $r=1$ in the previous step.
 We now show (2) for $r<1+\frac{1}{n}$
Set ${\mathcal G}_x=G_x-C_0-1 \leq -1$ and
consider $u$   the   $\omega$-sh solution of
$$
\frac{1}{V_{\omega}} (\omega+dd^c u) \wedge \omega^{n-1} =
\frac{(-{\mathcal G}_x)^{\beta} \omega^n}{\int_X (-{\mathcal G}_x)^{\beta} \omega^n},
$$
with $\int_X u \omega^n=0$, where $0<\beta < \frac{1}{n}$. We are going to show that 
$u \geq -C$ is uniformly bounded below. It will follow that
\begin{eqnarray*}
-C \leq u(x) &=& \int_X u \frac{(\omega+dd^c {\mathcal G}_x) \wedge \omega^{n-1}}{V_{\omega}} \\
&=& \int_X {\mathcal G}_x \frac{(\omega+dd^c u) \wedge \omega^{n-1}}{V_{\omega}} 
= -\frac{\int_X (-{\mathcal G}_x)^{1+\beta}  \frac{\omega^n}{V_{\omega}}}{\int_X (-{\mathcal G}_x)^{\beta}  \frac{\omega^n}{V_{\omega}}}.
\end{eqnarray*}
Since $1 \leq -{\mathcal G}_x$  we obtain 
$\int_X (-{\mathcal G}_x)^{\beta}  \frac{\omega^n}{V_{\omega}} \leq \int_X (-{\mathcal G}_x)  \frac{\omega^n}{V_{\omega}}=1+C_0$, hence
$$
\int_X (-{\mathcal G}_x)^{1+\beta}  \frac{\omega^n}{V_{\omega}} \leq C[1+C_0],
$$
proving (2) for $r=1+\beta$, as ${\mathcal G}_x$ differs from $G_x$ by a uniform additive constant.

\smallskip

To prove that $u$ is uniformly bounded below, we
 consider  the normalized solution $\f \in PSH(X,\omega) \cap L^{\infty}(X)$, $\sup_X \f=0$,  
of the Monge-Amp\`ere equation
$$
\frac{1}{V_{\omega}} (\omega+dd^c \f)^n = \frac{(-{\mathcal G}_x)^{n\beta} \omega^n}{\int_X (-{\mathcal G}_x)^{n\beta} \omega^n}.
$$
Since $-{\mathcal G}_x \geq 1$, the density of the RHS is bounded from above by $(-{\mathcal G}_x)^{n\beta} f_{\omega}$.
It follows from H\"older inequality that the latter belongs to $L^{p'}(dV_X)$, $p'<p$, since
\begin{eqnarray*}
\int_X (-{\mathcal G}_x)^{n\beta p'} f_{\omega}^{p'} dV_X
&=& \int_X (-{\mathcal G}_x)^{n\beta p'} f_{\omega}^{p'-1} \frac{\omega^n}{V_{\omega}} \\
&\leq &  \left( \int_X f_{\omega}^p dV_X \right)^{\frac{p'-1}{p-1}} 
\left( \int_X (-{\mathcal G}_x)^{n\beta p's'} \frac{\omega^n}{V_{\omega}} \right)^{\frac{1}{s'}}
\leq C',
\end{eqnarray*}
where $s'=\frac{p-1}{p-p'}$ is the conjugate exponent of $s=\frac{p-1}{p'-1}$:
we choose $p'>1$ very close to $1$ (thus $s'>1$ is very close to $1$ as well) so that
$n\beta p' s'<1$, and the last integral is under control by the first step.
Theorem \ref{thm:uniformMA} now ensures that 
$$
-M_0 \leq \f \leq 0
$$
for some uniform   $M_0$.
Since $\int_X (-{\mathcal G}_x)^{n\beta} \frac{\omega^n}{V_{\omega}} 
\leq \int_X (-{\mathcal G}_x) \frac{\omega^n}{V_{\omega}} \leq 1+C_0=:C_1^n$,
it follows from the AM-GM inequality that $(\omega+dd^c \f) \wedge \omega^{n-1} \geq \frac{(-{\mathcal G}_x)^{\beta}}{C_1} \omega^n$,
while $ (-{\mathcal G}_x)^{\beta}  \omega^n \geq (\omega+dd^c u) \wedge \omega^{n-1}$ since $-{\mathcal G}_x \geq 1$.
We infer that $\f-u/C_1$ is a normalized $\omega$-sh function;
it is   bounded from above by Lemma \ref{lem:5.1}, hence
$$
-M_0C_1-C_2 \leq C_1 \f -C_2 \leq u \leq C_2',
$$
showing that $u$ is uniformly bounded, as claimed.

\medskip

\noindent   {\it Step 3}.
We now  establish (2) for the optimal values of $r$ by a recursive argument.
Indeed the reasoning from {\it Step 2} shows that if $\int_X (-{\mathcal G}_x)^{n\beta} \frac{\omega^n}{V_{\omega}} \leq C$
for $n\beta \leq \beta'$, then $\int_X (-{\mathcal G}_x)^{r} \frac{\omega^n}{V_{\omega}} \leq C'$
for $r<1+\frac{\beta'}{n}$. By induction this yields, for all $k \in \N$,
$$
\int_X (-{\mathcal G}_x)^{r} \frac{\omega^n}{V_{\omega}} \leq C_r
\; \; \text{ for }
r<1+\frac{1}{n}+\frac{1}{n^2}+\cdots+\frac{1}{n^k}.
$$
Thus a uniform control can be obtained for all $r<\frac{n}{n-1}$.

\medskip

\noindent   {\it Step 4}.
It follows from {\it Step 3}, Lemma \ref{lem:weightedgradient} below and H\"older inequality
that (3) holds for $s<\frac{2n}{2n-1}$.
Indeed set $r=\frac{s}{2-s}(1+\beta)$, and observe that 
by choosing $0<\beta$ very small and $r$ arbitrarily close to $\frac{n}{n-1}$,
we obtain $s$ arbitrarily close to  $\frac{2n}{2n-1}$.
Setting $2\alpha=s(1+\beta)$, we thus get
\begin{eqnarray*}
\int_X |\nabla {G}_x|^s  {\omega^n}= 
\int_X |\nabla {\mathcal G}_x|^s  {\omega^n} 
&=& \int_X \frac{|\nabla {\mathcal G}_x|^s}{|{\mathcal G}_x|^{\alpha}} |{\mathcal G}_x|^{\alpha} {\omega^n}  \\
&\leq & \left( \int_X \frac{|\nabla {\mathcal G}_x|^2}{{|{\mathcal G}_x}|^{\frac{2\alpha}{s}}}  {\omega^n}  \right)^{\frac{s}{2}}
\left( \int_X |{\mathcal G}_x|^{\frac{2\alpha}{2-s}}  {\omega^n}  \right)^{\frac{2-s}{2}} \\
&=& \left( \int_X \frac{|\nabla {\mathcal G}_x|^2}{{|{\mathcal G}_x}|^{1+\beta}}  {\omega^n}  \right)^{\frac{s}{2}}
\left( \int_X |{\mathcal G}_x|^{r}  {\omega^n}  \right)^{\frac{2-s}{2}} \leq C(s).
\end{eqnarray*}
 \end{proof}

 \begin{rem}
 All these estimates are valid, more generally, for any $\omega$-psh function $v$ which is 
 normalized by $\int_X v \omega^n=0$. Indeed using Stokes theorem we obtain
 $$
 v(x) =\frac{1}{V_{\omega}} \int_X G_x (\omega+dd^c v) \wedge \omega^{n-1} \leq \sup_X G_x \leq C_0.
 $$
 The other estimates  follow by using the symetry $G_x(y)=G_y(x)$.
 \end{rem}

 We have used the following observation \cite[Lemma 5.6]{GPSS22}.
 
 \begin{lem} \label{lem:weightedgradient}
 Fix $\beta>0$. Then
 $$
 \frac{1}{V_{\omega}} \int_X \frac{d G_x \wedge d^c G_x \wedge \omega^{n-1}}{(-G_x+C_0+1)^{1+\beta}} \leq \frac{1}{\beta}.
 $$
 \end{lem}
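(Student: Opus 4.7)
The plan is to reduce the estimate to an integration-by-parts identity involving the Green's equation $dd^c G_x\wedge\omega^{n-1}=V_\omega\delta_x-\omega^n$. Following the notation of Step~2 of Theorem~\ref{thm:Green1}, set $\mathcal{G}_x=G_x-C_0-1$, so that $-\mathcal{G}_x\geq 1$ by part~(1) of Theorem~\ref{thm:Green1}. Since $\mathcal{G}_x$ differs from $G_x$ by a constant, one has $d\mathcal{G}_x\wedge d^c\mathcal{G}_x=dG_x\wedge d^c G_x$ and $dd^c\mathcal{G}_x=dd^c G_x$, so the desired inequality becomes
$$
\frac{1}{V_\omega}\int_X \frac{d\mathcal{G}_x\wedge d^c\mathcal{G}_x\wedge\omega^{n-1}}{(-\mathcal{G}_x)^{1+\beta}}\leq\frac{1}{\beta}.
$$

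The natural test function is the primitive $\chi(s)=\frac{1}{\beta(-s)^{\beta}}$ defined on $(-\infty,-1]$: it satisfies $\chi'(s)=(-s)^{-(1+\beta)}$, is non-negative, increasing in $s$, and bounded by $\chi(-1)=1/\beta$. I would then apply Stokes' theorem to the $(2n-1)$-form $\chi(\mathcal{G}_x)\,d^c\mathcal{G}_x\wedge\omega^{n-1}$, obtaining
$$
\int_X \chi'(\mathcal{G}_x)\,d\mathcal{G}_x\wedge d^c\mathcal{G}_x\wedge\omega^{n-1}=-\int_X\chi(\mathcal{G}_x)\,dd^c\mathcal{G}_x\wedge\omega^{n-1}.
$$
Using the Green equation and the vanishing $\chi(\mathcal{G}_x(x))=\chi(-\infty)=0$, the right-hand side reads
$$
\int_X \chi(\mathcal{G}_x)\,\omega^n - V_\omega\,\chi(\mathcal{G}_x(x))=\int_X\chi(\mathcal{G}_x)\,\omega^n\leq\frac{V_\omega}{\beta},
$$
which, after dividing by $V_\omega$, is exactly the asserted inequality.

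The one genuine technical point is justifying the Stokes computation, since $G_x$ has a (logarithmic or polynomial) singularity at $x$. I would regularize by truncating $\mathcal{G}_x$ from below, e.g.\ setting $\mathcal{G}_x^{(k)}:=\max(\mathcal{G}_x,-k)$ and noting that $\mathcal{G}_x^{(k)}\in\mathcal{C}^{0,1}$, or equivalently integrating over $X\setminus B_\omega(x,\varepsilon)$ and letting $\varepsilon\to 0$. The boundary term produced by this regularization carries a factor $\chi(-k)=O(k^{-\beta})\to 0$, while $\int_{X\setminus B(x,\varepsilon)}d^c \mathcal{G}_x\wedge\omega^{n-1}$ stays bounded because $\mathcal{G}_x$ is smooth away from $x$ and satisfies the Poisson equation with a bounded right-hand side; similarly the contribution of the Dirac mass is recovered in the limit and matches the vanishing of $\chi$ at $-\infty$. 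Monotone convergence then yields the identity for the original singular $\mathcal{G}_x$, and the bound on $\chi$ produces the constant $1/\beta$.
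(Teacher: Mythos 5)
Your argument is correct and is essentially the paper's proof: the test function $\chi(\mathcal{G}_x)=\frac{1}{\beta}(-G_x+C_0+1)^{-\beta}$ is, up to the factor $\beta$, exactly the function $u$ the paper integrates against the Green measure, and both proofs conclude by Stokes, the vanishing of the test function at $x$, and the bound $0\leq u\leq 1$. Your extra care with the regularization near the singularity of $G_x$ is a justified (and in the paper implicit) technical point.
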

 
 Here $C_0$ denotes the uniform constant from Theorem \ref{thm:Green1}.1.
 
 \begin{proof}
 The function $u(y)=(-G_x(y)+C_0+1)^{-\beta}$ takes values in $[0,1]$ with $u(x)=0$. 
 Since $du=\frac{\beta dG_x}{(-G_x+C_0+1)^{1+\beta}}$, we infer
 $$
 0=\frac{1}{V_{\omega}} \int_X u (\omega+dd^c G_x) \wedge \omega^{n-1}
 =\frac{1}{V_{\omega}} \int_X u \omega^{n}
 - \frac{\beta}{V_{\omega}} \int_X \frac{d G_x \wedge d^c G_x \wedge \omega^{n-1}}{(-G_x+C_0+1)^{\beta+1}}.
 $$
 The result follows.
 \end{proof}

 \subsection{Sobolev estimates}  \label{sec:sobolev}

The following is our improved and family version of \cite[Theorem 2.1 and Lemma 6.2]{GPSS23}.

   \begin{theorem}  \label{thm:sobolev}
   Fix $1<r< \frac{2n}{n-1}$, $t \in \D^*$ and $\omega \in {\mathcal K}(\mathcal{X},p,A,B)$.
   
   1) For all $u\in W^{1,2}(X_t)$, we have 
$$
 \left(\frac{1}{V_{\omega_t}} \int_{X_t} |u-\overline{u}|^{2r}\omega_t^n\right)^{1/r}
 \leq C_1\frac{1}{V_{\omega_t}} \int_{X_t} |\nabla u|^2_{\omega_t} \omega_t^n,
$$
where $\overline u= \frac{1}{V_{\omega_t}}\int_{X_t} u \omega_t^n$ and $C_1=C_1(n,p,r,A,B)>0$.
    
    2) If $\Omega \subset X_t$ is a domain and $u\in W^{1,2}(\Omega)$
    has compact support in $\Omega$, then 
    $$
 \left( \frac{1}{V_{\omega_t}} \int_{\Omega} |u|^{2r}\omega_t^n \right)^{1/r}
 \leq C_2 \left[1+ \frac{V_{\omega_t}(\Omega)}{V_{\omega_t}(X_t \setminus \Omega)} \right] 
 \frac{1}{V_{\omega_t}}  \int_{\Omega} |\nabla u|^2_{\omega_t} \omega_t^n,
$$
where $C_2=C_2(n,p,r,A,B)>0$.
 \end{theorem}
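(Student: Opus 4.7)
The strategy, following \cite{GPSS23}, combines the Green's function representation with the uniform kernel estimates from Theorem A; the dependence on the family is absorbed entirely into the constants provided by Theorem A.

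For part (1), I would first reduce by density to smooth $u \in C^\infty(X_t)$ with $\bar u := V_{\omega_t}^{-1}\int_{X_t} u\,\omega_t^n = 0$. Stokes' theorem applied to the defining equation of $G_x^{\omega_t}$ then yields
\[
u(x) = -\frac{1}{V_{\omega_t}} \int_{X_t} \langle \nabla G_x^{\omega_t}(y), \nabla u(y)\rangle_{\omega_t}\, \omega_t^n(y),
\]
hence the pointwise bound $|u(x)| \leq T(|\nabla u|)(x)$, where $T g(x) := V_{\omega_t}^{-1} \int_{X_t} K(x,y)\, g(y)\, \omega_t^n(y)$ is the integral operator with kernel $K(x,y) := |\nabla_y G_x^{\omega_t}(y)|_{\omega_t}$. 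The task reduces to proving $T : L^2(d\mu_t) \to L^{2r}(d\mu_t)$ with constant depending only on $n, p, r, A, B$, where $d\mu_t := \omega_t^n/V_{\omega_t}$ denotes the probability measure. Theorem A (3) provides $\sup_x \|K(x,\cdot)\|_{L^s(d\mu_t)} \leq C$ for any fixed $s < 2n/(2n-1)$. Together with the companion bound $\sup_y \|K(\cdot,y)\|_{L^s(d\mu_t)} \leq C$ in the first variable, the generalized Young inequality for integral operators would give $T : L^2 \to L^{2r}$ whenever $1 + 1/(2r) = 1/s + 1/2$; letting $s \nearrow 2n/(2n-1)$ sweeps out the full range claimed in the theorem.

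For (2), let $u$ have compact support in $\Omega$ and set $\bar u := V_{\omega_t}^{-1}\int_\Omega u\,\omega_t^n$. Since $u \equiv 0$ on $X_t \setminus \Omega$,
\[
|\bar u|\, V_{\omega_t}(X_t \setminus \Omega)^{1/(2r)} = \|u - \bar u\|_{L^{2r}(X_t \setminus \Omega)} \leq \|u - \bar u\|_{L^{2r}(X_t)},
\]
and the triangle inequality yields $\|u\|_{L^{2r}(\Omega)} \leq \|u-\bar u\|_{L^{2r}(X_t)}\bigl(1 + [V_{\omega_t}(\Omega)/V_{\omega_t}(X_t \setminus \Omega)]^{1/(2r)}\bigr)$. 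Raising to the $2r$-th power and invoking (1) applied to $u - \bar u$ produces the factor $[1 + V_{\omega_t}(\Omega)/V_{\omega_t}(X_t \setminus \Omega)]$ claimed in the theorem.

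The principal obstacle is the companion $L^s$ bound on $K(\cdot, y)$, since Theorem A (3) directly controls the kernel only in the second variable. Closing this gap, either by a direct estimate on the gradient of $G_x^{\omega_t}$ in the pole $x$ (obtained by differentiating the defining PDE in $x$ and applying the maximum principle to the difference) or by an alternative route bypassing the Young step entirely --- for instance, a Moser-type iteration starting from the $L^1$-Poincar\'e inequality implied by Proposition \ref{prop:key} combined with Theorem A (2) --- is where the technical work concentrates. All constants in the resulting bounds remain uniform in $\omega$ and $t$ via the parameter-dependence of Theorem A.
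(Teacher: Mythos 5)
Your reduction of part (2) to part (1) is fine, but part (1) as written has a genuine gap, and it is precisely the one you flag yourself: the generalized Young/Schur argument requires the kernel $K(x,y)=|\nabla_y G_x^{\omega_t}(y)|_{\omega_t}$ to be bounded in $L^s$ with respect to \emph{both} variables, and Theorem A(3) only controls $\sup_x\|K(x,\cdot)\|_{L^s}$. The symmetry $G_x(y)=G_y(x)$ does not rescue you, because the gradient in $K$ is always taken in the evaluation variable: for fixed $y$, the quantity $\int_X |\nabla_y G_x(y)|^s\,\omega^n(x)$ involves the derivative at the fixed point $y$ as the \emph{pole} $x$ varies, and none of the estimates of Theorem A addresses this. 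Neither of your proposed repairs is carried out, and the first (differentiating the defining equation in the pole and invoking a maximum principle) is far from routine in this degenerate family setting. So the proof is incomplete at its central step.

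The paper avoids the two-sided kernel bound altogether by a different splitting. Starting from the same identity $|u(x)-\overline u|=\big|\frac{1}{V_\omega}\int_X du\wedge d^c\mathcal G_x\wedge\omega^{n-1}\big|$ (with $\mathcal G_x=G_x-C_0-1\le -1$), one inserts the weight $(-\mathcal G_x)^{(1+\beta)/2}$ and applies Cauchy--Schwarz:
\[
|u(x)-\overline u|\le\Big(\tfrac{1}{V_\omega}\int_X \tfrac{d\mathcal G_x\wedge d^c\mathcal G_x\wedge\omega^{n-1}}{(-\mathcal G_x)^{1+\beta}}\Big)^{1/2}\Big(\tfrac{1}{V_\omega}\int_X(-\mathcal G_x)^{1+\beta}|\nabla u|_\omega^2\,\omega^n\Big)^{1/2}.
\]
The first factor is bounded by $\beta^{-1/2}$ thanks to Lemma \ref{lem:weightedgradient}, a soft Stokes-type identity requiring no hard input. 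Taking the $L^r$ norm in $x$ and applying Minkowski's integral inequality, everything reduces to controlling $\sup_y\big(\frac{1}{V_\omega}\int_X(-\mathcal G_x(y))^{r(1+\beta)}\omega^n(x)\big)^{1/r}$, and \emph{here} the symmetry applies harmlessly because no gradient is involved: this is exactly the $L^{r(1+\beta)}$ estimate of Theorem A(2), valid for $r(1+\beta)<\frac{n}{n-1}$. This transfer of the burden from $\nabla G_x$ (where symmetry does not help) to $G_x$ itself (where it does) is the device your argument is missing. Incidentally, even granting your companion bound, the Young exponent bookkeeping with $s<\frac{2n}{2n-1}$ yields $2r<\frac{2n}{n-1}$, i.e.\ $r<\frac{n}{n-1}$, which is in fact the range the paper's own proof produces (its condition $(1+\beta)r<\frac{n}{n-1}$), so you would lose nothing on that score; but the missing kernel bound must be supplied, or the argument restructured as above.
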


 \begin{proof}
The proof is very similar to that of \cite[Theorem 2.1 and Lemma 6.2]{GPSS23}, 
so we only sketch it. We fix  $ \beta\in (0,1)$ such that  $(1+\beta)r< \frac{n}{n-1}$.  
Green's formula and  H\"older inequality yield
\begin{align*}
|u(x)-\bar u| &= \left| \frac{1}{V_\omega}\int_X  du\wedge d^c {\mathcal{G}_x}\wedge \omega^{n-1} \right|\\
&\leq   \left( \frac{1}{V_\omega}  \int_X \frac{d \mathcal{G}_x \wedge d^c \mathcal{G}_x\wedge \omega^{n-1}}{(-\mathcal{G}_x)^{1+\beta}}   \right)^{1/2} \left(  \frac{1}{V_\omega}\int_X ( - \mathcal{G}_x)^{1+\beta} |\nabla u|^2_\omega \omega^n \right)^{1/2}\\
&\leq \frac{1}{\beta^{1/2}} \left(  \frac{1}{V_\omega}\int_X ( - \mathcal{G}_x)^{1+\beta} |\nabla u|^2_\omega \omega^n \right)^{1/2},
\end{align*}
hence
\begin{equation}\label{eq:lhs}
 \left( \int_X |u-\overline{u}|^{2r}\omega^n\right)^{1/r}
 \leq\frac{1}{\beta} \Vert \frac{1}{V_\omega}\int_X ( - \mathcal{G}_x)^{1+\beta} |\nabla u|^2_\omega \omega^n \Vert_{L^r(X, \omega)}. 
\end{equation}
It follows from Minskowski's inequality for integrals that
\begin{align*}
\Vert \frac{1}{V_\omega}\int_X ( - \mathcal{G}_x)^{1+\beta} |\nabla u|^2_\omega \omega^n \Vert_{L^r(X, \omega)} &\leq   \frac{1}{V_\omega} \int_X \left(  \int_X  ( - \mathcal{G}_x)^{r(1+\beta)}\omega^n(x) \right)^{\frac{1}{r}}  |\nabla u|^2_\omega(y) \omega^n(y) \\
&\leq C_1   \frac{V_{\omega}^{1/r}}{V_{\omega}} \int_X |\nabla u|^2_\omega \omega^n, 
\end{align*}
using Theorem \ref{thm:Green1}.2. 
Together with \eqref{eq:lhs} we obtain
$$
 \left(\frac{1}{V_{\omega}} \int_X |u-\overline{u}|^{2r}\omega^n\right)^{1/r}
 \leq C_1\frac{1}{V_{\omega}} \int_X |\nabla u|^2_\omega \omega^n.
$$

\medskip

For the second inequality, as  in \cite[Lemma 6.2]{GPSS23}, using Green formula, one has for any $x\in \Omega$ and $\beta>0$,

\[
|u(x)|^2\leq \frac{C}{V_\omega(\Omega^c)}\int_X |\nabla u|^2_\omega \omega^2 + \frac{1}{ \beta V_\omega}  \int_X  (-\mathcal{G}(x,y))^{1+\beta}|\nabla u|^2_\omega \omega^n.
\]
Raising to the power $r$ on both sides   and arguing similarly to what we have done above, we obtain the required inequality.
 \end{proof}

  \section{Geometric applications}

  \subsection{Diameter bounds and non-collapsing}
  
  Among the various applications of Theorem A, we stress the following  diameter and non-collapsing estimates.
 
   \begin{thm}  \label{thm:diameter}
  Fix $0<\delta<1$ and $\omega \in {\mathcal K}(\mathcal{X},p,A,B)$. 
  There exists $C=C(n,p,\delta,A,B)>0$ such that  for all $ t \in \D^*, x \in X_t, $ and $r>0$,
  $$
  C  \min(1,r^{2n+\delta}) \leq \frac{1}{V_{\omega_t}} {\rm Vol}_{\omega_t}(B_{\omega_t}(x,r))
  \; \; \text{ and } \; \; 
  {\rm diam}(X_t,\omega_t) \leq C.
  $$
 Thus the family of compact metric spaces 
  $\{ (X_t,d_{\omega_t}), \; \omega \in {\mathcal K}(\mathcal{X},p,A,B), \,  t \in \D^* \}$
  is pre-compact in the Gromov-Hausdorff topology.
 \end{thm}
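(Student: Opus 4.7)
The plan is to follow the strategy of \cite[Section 6]{GPSS22}: derive small-scale non-collapsing from the uniform Sobolev inequality of Theorem \ref{thm:sobolev} via a Moser--De Giorgi iteration, obtain the diameter bound from a ball-packing argument, and deduce Gromov--Hausdorff pre-compactness from both.

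For the non-collapsing, I would fix $x \in X_t$ and apply the local Sobolev inequality of Theorem \ref{thm:sobolev}.2 to Lipschitz cutoff functions supported on a dyadic family of concentric balls $B_{\omega_t}(x, r_k)$, with gradient of order $1/(r_k - r_{k+1})$. Restricting to radii for which $V_{\omega_t}(B_{\omega_t}(x, r)) \leq V_{\omega_t}/2$ keeps the factor $V(\Omega)/V(\Omega^c)$ in Theorem \ref{thm:sobolev}.2 uniformly bounded. Iterating in the De Giorgi--Moser spirit, one obtains $V_{\omega_t}(B_{\omega_t}(x, r))/V_{\omega_t} \geq c\, r^{d}$, where $d$ is the Sobolev dimension attached to the exponent chosen in Theorem \ref{thm:sobolev}; taking this exponent close enough to its critical value makes $d$ arbitrarily close to $2n$, yielding $V_{\omega_t}(B_{\omega_t}(x, r))/V_{\omega_t} \geq c\, r^{2n+\delta}$ for any prescribed $\delta>0$ and any $r \in (0,1]$. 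For $r > 1$, non-collapsing follows by monotonicity: either $B_{\omega_t}(x, r) = X_t$, or $B_{\omega_t}(x, r) \supset B_{\omega_t}(x, 1)$ whose relative volume is already bounded below by $c$.

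For the diameter bound, I argue by a packing argument: assuming $D := {\rm diam}(X_t, \omega_t) > 1$, I sample $N \geq 2D$ points along a minimizing geodesic of length $D$ with pairwise distances exceeding $1/2$. The balls $B_{\omega_t}(x_i, 1/4)$ are then pairwise disjoint, each of relative volume at least $c\, 4^{-(2n+\delta)}$ by the small-scale non-collapsing; summing and comparing to total volume gives $D \leq C(n, p, \delta, A, B)$. For Gromov--Hausdorff pre-compactness, a maximal $\epsilon$-separated subset $\{x_i\}$ of $X_t$ has cardinality bounded by $[c(\epsilon/2)^{2n+\delta}]^{-1}$ via the same disjoint-ball count, and by maximality $\{B_{\omega_t}(x_i, \epsilon)\}$ covers $X_t$. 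Combined with the uniform diameter bound, this uniform total boundedness yields pre-compactness through Gromov's theorem.

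The main technical obstacle is the Moser iteration producing the small-scale volume lower bound: one has to keep careful track of how constants depend on the radius through the dyadic iteration, as well as on the proximity of the Sobolev exponent to its critical value. The loss of $\delta > 0$ in the exponent $2n + \delta$ is intrinsic to this method, reflecting the fact that Theorem \ref{thm:sobolev} provides Sobolev inequalities only strictly below the critical exponent.
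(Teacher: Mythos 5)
Your argument is correct, but it takes a genuinely different route from the paper for both estimates. The paper proves the diameter bound and the non-collapsing estimate \emph{directly} from the Green's function gradient bound of Theorem \ref{thm:Green1}.3: for the diameter, Green's formula applied to the $1$-Lipschitz function $\rho=d_{\omega}(x_0,\cdot)$ at the two endpoints of a diameter gives ${\rm diam}(X,\omega)\leq \frac{1}{V_\omega}\int_X|\nabla G_{x_0}|_\omega\omega^n+\frac{1}{V_\omega}\int_X|\nabla G_{y_0}|_\omega\omega^n$; for non-collapsing, Green's formula applied to $\rho\chi$ (with $\chi$ a cutoff on $B_\omega(x,r)$) at a point outside $\overline{B_\omega(x,r)}$ and at a point of $\partial B_\omega(x,r/2)$, combined with H\"older's inequality against $\|\nabla G_y\|_{L^s}$ with $s^*=\frac{s}{s-1}\in(2n,\infty)$, yields $\frac{r}{2}\leq 2C\left(\frac{{\rm Vol}_\omega(B(x,r))}{V_\omega}\right)^{1/s^*}$ in one stroke. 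You instead route everything through Theorem \ref{thm:sobolev}: Sobolev plus a Carron/De Giorgi-type iteration on dyadic balls gives non-collapsing, and a ball-packing count along a minimizing geodesic then gives the diameter bound. This is legitimate and standard (there is no circularity, since Theorem \ref{thm:sobolev} does not rely on Theorem \ref{thm:diameter}), and your handling of the $V(\Omega)/V(\Omega^c)$ factor and of radii $r>1$ is right; but it is a longer chain of dependencies, since Theorem \ref{thm:sobolev} is itself deduced from Theorem \ref{thm:Green1}, and the iteration you defer to is the most technical part of your argument whereas the paper's proof avoids it entirely. One bookkeeping point to be careful about: the volume exponent produced by the Sobolev inequality with exponent $2r$ is $d=\frac{2r}{r-1}$, so to reach $d=2n+\delta$ you must take $r$ close to $\frac{n}{n-1}$ from below (this is the exponent actually controlled in the proof of Theorem \ref{thm:sobolev}, via the constraint $(1+\beta)r<\frac{n}{n-1}$), not close to the endpoint $\frac{2n}{n-1}$ appearing in its statement. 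Your pre-compactness argument via maximal separated sets coincides with the paper's.
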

 
Due to its prominent role in geometric analysis, there has been an intensive search for such uniform diameter estimates in the past decade. We simply list the most recent contributions which require as an extra assumption 
\begin{itemize}
\item either a Ricci lower bound \cite{LTZ17,FGS20,GS22};
\item or  that $X$ be of general type \cite{Bam18,Wang18,JS22};
\item to control the  continuity of Monge-Amp\`ere potentials \cite{Li21,GGZ23,V23};
\item orelse a uniform lower bound on  $\omega^n/dV_X$ \cite{GPS22,GPSS22,GPSS23}.
\end{itemize}

The proof of Theorem \ref{thm:diameter} is similar to that of \cite[Theorem 1.1]{GPSS22}.

 \begin{proof}   
We fix $(x_0,y_0)\in X^2$ such that $d_\omega(x_0,y_0)={\rm diam}(X, \omega)$.
 The function $\rho: x \in  X \mapsto  d_\omega(x_0, x) \in \mathbb{R}^+$ is $1$-Lipschitz 
  with $\rho(x_0)=0$.   Green's formula applied to the function $\rho$ at the point $x_0$ yields
\[
\int_X \rho \omega^n  =\int_X d \rho  \wedge d^c G_{x_0}  \wedge\omega^{n-1}  \leq \int_X |\nabla G_{x_0}|_{\omega}\omega^n .
\] 
Using again Green's formula  at $y_0$ and  the previous inequality, we obtain
\begin{eqnarray*}
{\rm diam(X, \omega)} 
&=& \frac{1}{V_\omega} \int_X \rho\omega^n  -  \frac{1}{V_\omega} \int_X d \rho  \wedge d^c G_{y_0}  \wedge\omega^{n-1}\\
&\leq &  \frac{1}{V_\omega}\int_X |\nabla G_{x_0}|_{\omega}\omega^n
+\frac{1}{V_\omega}\int_X |\nabla G_{y_0}|_{\omega}\omega^n \leq C, 
\end{eqnarray*}
where the last inequality follows from Theorem \ref{thm:Green1}.3.

\smallskip

Next we prove the non-collapsing estimate.  
We fix $x \in X$ and consider  the uniformly bounded function $\rho: y \in X \mapsto  d_\omega(x, y) \in \mathbb{R}^+$.
Fix $r\in (0,1]$ and let  $\chi$ be a non-negative smooth cut-off function with support in $B_\omega(x, r)$ such that  
$\chi\equiv 1$ on  $\overline{B}_\omega(x, \frac{r}{2}) $ and $\sup_X |\nabla \chi|_\omega \leq \frac{C}{r}$.   
Thus $\rho \chi$ is a $C$-Lipschitz function.

We pick $s\in (1, \frac{2n}{2n-1})$ and denote by  $s^*=\frac{s}{s-1} \in (2n, \infty)$ the conjugate exponent of $s$. 
Applying Green's formula to  $\rho \chi$ at $y\in  \overline{B_{\omega}(x,r )}^c $,  we obtain
\begin{align*}
 \int_X \rho\chi \omega^n  &=  \frac{1}{V_\omega}  \int_X  d(\chi \rho) \wedge d^c G_y \wedge \omega^{n-1} \\
 &\leq  C 
 \left(\int_X |\nabla G_y|^s_\omega \omega^n\right)^{1/s} \left({\rm Vol}_\omega(B_{\omega}(x,r)) \right)^{1/s^*}\\
 &\leq CV_\omega^{1/s} \left({\rm Vol}_\omega(B_{\omega}(x,r)) \right)^{1/s^*},
\end{align*}
by Theorem \ref{thm:Green1}.3.
Applying Green's formula again with $z\in \partial B_\omega(x, r/2)$, we infer  
\begin{align*}
\frac{r}{2} &=\frac{1}{V_\omega} \int_X \rho\chi \omega^n 
+ \frac{1}{V_\omega}  \int_X  d(\chi \rho) \wedge d^c G_z \wedge \omega^{n-1} \\
&\leq 2CV_\omega^{-1+1/s} \left({\rm Vol}_\omega(B(x,r)) \right)^{1/s^*}= 2C\left(  \frac{{\rm Vol}_\omega(B(x,r)) }{V_\omega} \right)^{1/s^*}.
\end{align*}
Since $s^*=\frac{s}{s-1}\in (2n, \infty)$, this implies the non-collapsing estimate.

\medskip
Set ${\mathcal F}:=\{ (X_t,d_{\omega_t}), \; \omega \in {\mathcal K}(\mathcal{X},p,A,B), \,  t \in \D^* \}$.
Gromov's theorem \cite[Theorem 7.4.15]{BBI01} ensures that this family is pre-compact in the Gromov-Hausdorff
topology iff there is a uniform bound on the diameters and for each $\e>0$
one can find in each $X \in {\mathcal F}$ an $\e$-net consisting of no more than 
$N=N(\e)$-points. This follows easily from the uniform non-collapsing estimate, together
with the uniform upper bound on the global volumes ${\rm Vol}_{\omega_t}(X_t) \leq V_0$.
 \end{proof}

 \subsection{Diameters of smoothable cscK metrics} \label{sec:csck}

In this section we consider $(X,\beta)$ a compact $n$-dimensional K\"ahler variety with Kawamata log terminal (klt) singularities
which admits a $\Q$-Gorenstein smoothing
$\pi: {\mathcal X} \rightarrow \D$, i.e. 
\begin{itemize}
\item ${\mathcal X}$ is a $\Q$-Gorenstein complex space of complex dimension $n+1$,
\item $\pi$ is a proper surjective holomorphic map such that ${\mathcal X}_{|\pi^{-1}(0)} \sim X_0$,
 \item $X_t={\mathcal X}_{|\pi^{-1}(t)}$ is smooth for all $t \in \D^*$,
 \item there is a smooth    form $\beta_{{\mathcal X}}$ such that 
 $\beta_t={\beta_{{\mathcal X}}}_{|X_t}$ is K\"ahler with $\beta_0=\beta$.
\end{itemize}

When the Mabuchi functional $M_{\beta}$ is coercive, it has been shown in \cite[Theorem C]{PTT23} that
so are the Mabuchi functionals $M_{\beta_t}$ , hence
there exists a unique constant scalar curvature K\"ahler metric $\omega_t$ cohomologous to $\beta_t$.

    \begin{coro} \label{cor:csck}
   	In the setting above, there exists   $D>0$ such that for all $t \in \D^*$,
   	$$
   	{\rm diam}(X_t,\omega_t) \leq D.
   	$$
   \end{coro}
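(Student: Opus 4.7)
The strategy is to reduce the statement to Theorem \ref{thm:diameter} by verifying that the family of cscK metrics $\{\omega_t\}_{t\in\D^*}$ defines a relative K\"ahler form $\omega$ on $\mathcal X\setminus X_0$ lying in $\mathcal K(\mathcal X,p,A,B)$ for some $p>1$ and constants $A,B>0$. The cohomological assumption is the easy part: since $\omega_t$ is cscK in the class $\{\beta_t\}$, one has $\{\omega_t\}=\{\beta_t\}$, so $A=1$ works. What remains is to establish a uniform $L^p$ bound on the densities $f_t=V_{\omega_t}^{-1}\omega_t^n/dV_{X_t}$, together with the fact that the $\omega_t$'s fit into a relative K\"ahler form off the central fiber.

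The first step is to write $\omega_t=\beta_t+dd^c\varphi_t$ with $\varphi_t\in\mathrm{PSH}(X_t,\beta_t)$ suitably normalized, and to recall that the cscK equation for $\omega_t$ can be decoupled, \`a la Chen--Cheng, into a complex Monge--Amp\`ere equation $(\beta_t+dd^c\varphi_t)^n=e^{F_t}\beta_t^n$ coupled with an elliptic equation for $F_t$ involving $\mathrm{tr}_{\omega_t}\mathrm{Ric}(\beta_t)$ and the constant scalar curvature. From \cite{CC21}, coercivity of the Mabuchi functional $M_{\beta_t}$ controls $\|\varphi_t\|_{L^\infty(X_t)}$ and $\|F_t\|_{L^\infty(X_t)}$ in terms of the coercivity constants and geometric data of $(X_t,\beta_t)$. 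Thus, pointwise uniform bounds on the density $f_t$ will follow from \emph{uniform} coercivity and uniform control of the background geometry along the family.

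The second step, and the main technical input, is to invoke \cite[Theorem C]{PTT23}: it is precisely shown there that if $M_\beta$ is coercive on the klt central fiber, then there exist uniform coercivity estimates for $M_{\beta_t}$ on the nearby smooth fibers (with constants independent of $t\in\D^*$, possibly shrinking $\D$). Combining this with the Chen--Cheng a priori estimates, one gets a uniform $L^\infty$ bound $\|f_t\|_{L^\infty(X_t)}\leq C$, which trivially implies $\int_{X_t}f_t^p\,dV_{X_t}\leq B$ for any $p>1$. One also has to argue that the family $\omega_t$ glues into a smooth relative K\"ahler form on $\mathcal X\setminus X_0$; this uses the uniqueness of the cscK metric in each class and the smoothness of solutions in $t$ away from $X_0$, which is part of the deformation theory developed in \cite{PTT23}.

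Once $\omega\in\mathcal K(\mathcal X,p,A,B)$ has been established, Theorem \ref{thm:diameter} applies directly and yields the uniform diameter bound $\mathrm{diam}(X_t,\omega_t)\leq D$. The main obstacle is clearly the second step: transferring the coercivity of the Mabuchi functional from the singular central fiber to the nearby smooth fibers with uniform constants, so that Chen--Cheng-type estimates give the uniform density bound needed to enter the framework of Theorems A and B. Fortunately this is exactly what \cite{PTT23} provides, so the corollary follows by assembling these ingredients.
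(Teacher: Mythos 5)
Your overall architecture matches the paper's: since $\omega_t$ is cscK in $\{\beta_t\}$ you get the cohomological bound with $A=1$ for free, and the whole corollary reduces to producing a uniform $L^p$ bound on the densities $f_t=e^{F_t}$ so that Theorem \ref{thm:diameter} applies. The gap is in how you produce that bound. You claim that uniform coercivity of $M_{\beta_t}$ plus the Chen--Cheng a priori estimates yields $\|f_t\|_{L^\infty(X_t)}\leq C$ uniformly in $t$. This is too strong and the mechanism does not work: the Chen--Cheng $L^\infty$ estimates on $F_t$ depend on geometric data of the background $(X_t,\beta_t)$ (curvature bounds, Sobolev/Green's function bounds) which degenerate as $t\to 0$ toward the singular central fiber, so they are not uniform along the smoothing. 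Moreover the conclusion itself is false in general: on the klt limit $X_0$ the density of the singular cscK metric with respect to a smooth volume form typically blows up near the singularities (it lies in $L^p$ for some $p>1$ governed by the discrepancies, not in $L^\infty$), so no uniform sup-bound on $f_t$ can hold. If a uniform $L^\infty$ bound were available, one would not need the $L^p$ framework of Theorems A and B at all.

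What the paper actually uses is \cite[Theorem 5.3]{PTT23}, which directly provides the uniform bound $\|e^{F_t}\|_{L^p(X_t,\beta_t^n)}\leq B$ for some $p>1$ independent of $t$ (this is where the uniform coercivity/entropy control enters, filtered through the klt condition). With that single input, together with the uniform bounds on $V_{\beta_t}=V_{\omega_t}$ from Setting \ref{set}, one gets $\omega\in\mathcal K(\mathcal X,p,1,B)$ and Theorem \ref{thm:diameter} concludes. So your proof becomes correct once you replace the claimed $L^\infty$ density bound by the uniform $L^p$ bound of \cite[Theorem 5.3]{PTT23}; the detour through Chen--Cheng sup-norm estimates should be removed.
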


   \begin{proof}
For $t\in \mathbb{D}^* $, the unique constant scalar curvature K\"ahler metric $\omega_t\in [\beta_t]$ 
satisfies the following coupled equations:
$$
\left\{\begin{matrix}
   (\beta_t+ dd^c  \varphi_t)^n = e^{F_t} \beta_t^n  \\ 
  \Delta_{\omega_t} F_t= -\bar s_t + {\rm Tr}_{\omega_t} \Ric(\beta_t).
\end{matrix}\right. 
 $$
 
 The volumes $\int_X \beta_t^n$ are uniformly bounded away from zero and infinity, while
it follows from  \cite[Theorem 5.3]{PTT23} that 
the  smooth densities $f_t=e^{F_t}$ 
 satisfy $\|f_t\|_{L^p(X_t, \beta_t^n)} \leq B$,  for some $p>1$  and  for all $t\in \mathbb{D}^*$.  
Thus $\omega\in {\mathcal K}(\mathcal{X},p,1,B)$, and the uniform bound for ${\rm diam }(X_t, \omega_t) $ follows from  Theorem \ref{thm:diameter}. 
   \end{proof}

   \subsection{Estimates along the K\"ahler-Ricci flow} \label{sec:krf}
   
   We assume here that $X$ is a compact K\"ahler manifold with $K_X$ nef (a  smooth minimal model).
   We consider
   $$
   \frac{\partial \omega_t}{\partial t}=-{\rm Ric}(\omega_t)-\omega_t
   $$
   the normalized K\"ahler-Ricci flow starting from an initial K\"ahler metric $\omega_0$.
   It follows from \cite{TZ06} that this   flow exists for all times $t>0$.
   We refer the reader to \cite{Tos18,Tos24} for recent overviews of the theory of the K\"ahler-Ricci flow.
   
   It has been
   a challenging open problem up to now to obtain uniform geometric bounds along the flow  as $t \rightarrow +\infty$.
 We obtain here the following striking result which -in particular- solves \cite[Conjecture 6.2]{Tos18}:
    
      \begin{theorem} \label{thm:fkr}
      Fix $0<\delta<1$.
   There exist $C=C(\omega_0)>0$ and $c=c(\omega_0,\delta)>0$ such that for all $t>0$ and $x \in X$,
   $$
   {\rm diam}(X,\omega_t) \leq C  
   \; \; \text{ and } \; \;
     {\rm Vol}_{\omega_t}(B_{\omega_t}(x,r)) \geq c  r^{2n+\delta} V_{\omega_t},
   $$
   whenever $0<r<{\rm diam}(X,\omega_t)$.
   \end{theorem}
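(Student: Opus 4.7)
The plan is to deduce Theorem C from Theorem B, by realizing the K\"ahler-Ricci flow trajectory $\{\omega_t\}_{t > 0}$ as a one-parameter family belonging to some class $\mathcal{K}(\mathcal{X}, p, A, B)$ with constants depending only on $\omega_0$. I take the trivial family $\mathcal{X} := X \times 2\D$ with $\pi$ the second projection, reparameterize flow-time by $s := e^{-t}$, and let $\omega$ be the relative form on $\mathcal{X} \setminus X_0$ defined by $\omega_{|X \times \{s\}} = \omega_{t(s)}$. Once membership in $\mathcal{K}(\mathcal{X}, p, A, B)$ is verified, the uniform diameter bound and non-collapsing estimate of Theorem B immediately yield Theorem C.

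Verifying the cohomological hypothesis is straightforward from the evolution of K\"ahler classes along the flow: $[\omega_t] = e^{-t}[\omega_0] + (1-e^{-t})[K_X]$ traces a bounded segment in $H^{1,1}(X, \R)$, so $\int_X \omega_t \wedge \beta_0^{n-1} \leq A(\omega_0)$ uniformly in $t$ for any fixed K\"ahler form $\beta_0$ on $X$. To prepare for the density bound, I write $\omega_t = \theta_t + dd^c\varphi_t$ with $\theta_t := e^{-t}\omega_0 + (1-e^{-t})\chi$ and $\chi \in c_1(K_X)$ a smooth representative, and fix a smooth volume form $\Omega_0$ with $\Ric(\Omega_0) = -\chi$. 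The normalized KRF then reduces to the Monge-Amp\`ere equation $\omega_t^n = e^{\dot\varphi_t + \varphi_t}\Omega_0$. Classical flow estimates (Tian-Zhang \cite{TZ06} long-time existence, Zhang-type $L^\infty$ bounds on $\varphi_t$, and a parabolic maximum-principle upper bound on $\dot\varphi_t$) yield $\|\varphi_t\|_{L^\infty(X)} \leq C(\omega_0)$ and $\dot\varphi_t \leq C(\omega_0)$, hence the pointwise upper bound $\omega_t^n \leq C\,\Omega_0$, uniformly in $t$.

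The main obstacle is the collapsing regime, where $K_X$ is nef but not big and $V_{\omega_t} = [\omega_t]^n \to 0$ as $t \to \infty$. Against a fixed reference $\beta_0$, the normalized density $f_s = \omega_s^n V_{\beta_0}/(V_{\omega_s}\beta_0^n)$ is controlled only by $C(\omega_0)/V_{\omega_s}$, which blows up precisely when $V_{\omega_s} \to 0$; hence a time-independent $\beta$ cannot satisfy the $L^p$ hypothesis of Theorem B. To circumvent this, $\beta$ must be chosen to vary with $s$ so that $V_{\beta_s}$ stays in a bounded interval while $\beta_s^n$ is $L^p$-comparable to $\omega_s^n/V_{\omega_s}$. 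A natural candidate is $\beta_s := \omega_0 + dd^c u_s$ in the fixed class $[\omega_0]$, with $u_s$ the normalized solution of the auxiliary Monge-Amp\`ere equation $(\omega_0 + dd^c u_s)^n = V_{\omega_0}\,\omega_s^n/V_{\omega_s}$: the uniform estimates of \cite{DnGG23} control $\|u_s\|_{L^\infty}$ independently of $s$, and standard parametric arguments give smoothness of $\beta$ away from the central fiber. With this choice $f_s \equiv 1$ and $V_{\beta_s} \equiv V_{\omega_0}$, so $\omega \in \mathcal{K}(\mathcal{X}, p, A, B)$ with constants depending only on $\omega_0$. Theorem B then gives $\mathrm{diam}(X, \omega_t) \leq C(\omega_0)$ and $\mathrm{Vol}_{\omega_t}(B_{\omega_t}(x,r)) \geq c\,r^{2n+\delta} V_{\omega_t}$ whenever $0 < r < \mathrm{diam}(X, \omega_t)$, completing the proof.
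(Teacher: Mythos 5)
Your overall strategy -- realize the flow trajectory as a family in ${\mathcal K}(X\times 2\D,p,A,B)$ via $s=e^{-t}$ and invoke Theorem B -- is exactly the paper's, and the cohomological bound is handled correctly. But the treatment of the $L^p$ density hypothesis has a genuine gap. First, the ``obstacle'' you identify is not real: a \emph{fixed} reference form ($\beta=\omega_0$) does work, because the sharp upper bound is $\omega_t^n\leq C\,V_{\omega_t}\,\omega_0^n$, not merely $\omega_t^n\leq C\,\omega_0^n$. The paper obtains this by normalizing the potential against the numerical dimension $\nu$ of $K_X$, writing
$$
(e^{-t}\omega_0+(1-e^{-t})\chi+dd^c\f_t)^n=e^{\partial_t\f_t+\f_t-(n-\nu)t}\,\omega_0^n ,
$$
so that the exponential prefactor $e^{-(n-\nu)t}$ exactly matches the volume asymptotics $V_{\omega_t}\sim c\,e^{-(n-\nu)t}$ of \eqref{eq:volfkrn}. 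With this normalization one proves $\f_t\leq C_0$ (via $I(t)=\int_X\f_t\,\omega_0^n/V_0$, which satisfies $I'\leq -I-C'$ by concavity of the logarithm) and $\partial_t\f_t\leq C_1$ (maximum principle applied to $H=(e^t-1)\partial_t\f_t-\f_t-\nu t-(n-\nu)e^t$), whence $V_{\omega_t}^{-1}\omega_t^n\leq C_2\,\omega_0^n$ and the density is even in $L^{\infty}$. Your version of the parabolic equation omits the $-(n-\nu)t$ term, which is why your bound degenerates by a factor $V_{\omega_t}^{-1}$.

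Second, the workaround you propose is circular and outside the scope of Theorem B. To control $\|u_s\|_{L^\infty}$ for the auxiliary equation $(\omega_0+dd^c u_s)^n=V_{\omega_0}\,\omega_s^n/V_{\omega_s}$ via \cite{DnGG23}, you need an $L^p$ bound (for some $p>1$, against a fixed volume form) on the density $V_{\omega_0}\,\omega_s^n/(V_{\omega_s}\omega_0^n)$ -- which is precisely the estimate you set out to establish; if you had it, you would not need $\beta_s$ at all. Without it, $u_s$ is uncontrolled and $\beta_s$ need not be uniformly K\"ahler or uniformly bounded. Moreover, Setting \ref{set} requires $\beta$ to be a fixed smooth relative K\"ahler form on the total space ${\mathcal X}$ (across the central fiber), and all constants in Theorems \ref{thm:sz}--\ref{thm:diameter} depend on this fixed $\beta$; feeding in an $s$-dependent reference form is not a legitimate application of Theorem B without re-establishing uniformity of those constants. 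The correct repair is to discard the auxiliary $\beta_s$ and prove the two maximum-principle bounds above with the $\nu$-adjusted normalization.
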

   
  This result has been established by Guo-Phong-Song-Sturm under the extra assumption that 
  the Kodaira dimension ${\rm kod}(X)$ of $X$ is non-negative (see \cite[Theorem 2.3]{GPSS22}).
  Recall that 
  $$
  {\rm kod}(X)=\limsup_{m \rightarrow+\infty} \left[ \frac{\log \dim H^0(X,mK_X)}{\log m} \right]
  $$
  measures the asymptotic growth of the number of holomorphic pluricanonical forms,
 while the numerical dimension  
 $
 \nu=\nu(X)=\sup \{ k \geq 0, c_1(K_X)^k \neq 0 \}.
 $
 measures the asymptotic growth of volumes under the NKRF,
 \begin{equation} \label{eq:volfkrn}
 {\rm Vol}_{\omega_t}(X)=
\left( \begin{array}{c} n \\ \nu \end{array} \right) 
 c_1(K_X)^{\nu} \{\omega_0\}^{n-\nu} e^{-(n-\nu)t} [1+o(1)].
  \end{equation}
  It is known that $\nu(X) \geq {\rm kod}(X)$ and the equality turns out to be equivalent to the abundance conjecture
 (see \cite[Conjecture 6.3]{Tos18}).
 The extra assumption made in \cite[Theorem 2.3]{GPSS22} requires that ${\rm kod}(X) \neq -\infty$;
it is equivalent to 
$$
\nu(X) \geq 0 \stackrel{?}{\Longrightarrow} {\rm kod}(X) \geq 0,
$$
which has been an open problem for the last fifty years.
   
   \begin{proof}
 Fix $\chi$ a smooth closed $(1,1)$-form representing $c_1(K_X)$. It follows from the $\partial\overline{\partial}$-lemma
 that $\omega_t=e^{-t}\omega_0+(1-e^{-t}) \chi+dd^c \f_t$ for some $\f_t \in {\mathcal C}^{\infty}(X)$. One can normalize
 the latter so that the NKRF is equivalent to the   parabolic equation
 $$
 (e^{-t}\omega_0+(1-e^{-t}) \chi+dd^c \f_t)^n=e^{\partial_t \f_t+\f_t-(n-\nu)t} \omega_0^n
 $$
 on $X \times \R^+$, with $\f_0=0$. Here $\nu$ denotes the numerical dimension of $K_X$.
 
 We claim that there exists $C_0,C_1>0$ such that for all $t>0$ and $x \in X$,
 $$
 \f_t(x) \leq C_0
 \; \; \text{ and } \; \; 
 \partial_t \f_t(x) \leq C_1.
 $$
 It will then follow from \eqref{eq:volfkrn} that $V_{\omega_t}^{-1}\omega_t^n=f_t \omega_0^n$ with
 $||f_t||_{\infty} \leq C_2$. The uniform diameter and non-collapsing estimates are thus consequences 
 of Theorem A.
 
 \smallskip
 
 \noindent {\it Upper bound on $\f_t$}.
 We set $V_t=\int_X \omega_t^n$ and  $I(t)=\int_X \f_t \frac{\omega_0^n}{V_0}$.
 It follows from \cite[Proposition 8.5]{GZbook} that 
 $\sup_X \f_t \leq I(t)+C$ for some uniform constant $C>0$, hence it suffices to bound
 $I(t)$ from above. The concavity of the logarithm ensures that  
 $$
 I'(t)=\int_X \partial_t \f_t \frac{\omega_0^n}{V_0} \leq -I(t)+
 [\log V_t+(n-\nu)t-\log V_0]\leq -I(t)-C'.
 $$
Using that $I(0)=0$ we conclude that $I(t) \leq C'$, as desired.
 
 \smallskip
 
 \noindent {\it Upper bound on $\partial\f_t$}.
 Consider $H(t,x)=(e^t-1) \partial_t \f_t(x)-\f_t(x)-h(t)$, where
 $h(t)=\nu t+(n-\nu)e^t$. A direct computation shows that
 $$
 \left( \frac{\partial}{\partial t}-\Delta_{\omega_t} \right)(H)=-{\rm Tr}_{\omega_t}(\omega_0)
 +(n-\nu)[e^t-1] +n-h'(t) \leq 0.
 $$
 The maximum principle thus ensures that $H(t,x) \leq \max_{x \in X} H(0,x) \leq 0$, hence
 $$
 (e^t-1) \partial_t \f_t(x) \leq \f_t(x)+\nu t +(n-\nu)e^t \leq C_0+ne^t.
 $$
 Thus $\partial_t \f_t(x) \leq C_1$ for all $t \geq 1$, while such an upper bound is
 clear on $X \times [0,1]$ by compactness.
  The proof is complete.
   \end{proof}

   \subsection{Fiberwise Calabi-Yau  metrics} \label{sec:ke}
   
   Our estimates can be applied in
 the study of adiabatic limits  of Ricci-flat K\"ahler metrics on a Calabi-Yau manifold 
 under the degeneration of the K\"ahler class, as initiated by Tosatti in \cite{Tos10}.
   
   Let $(X,\beta_X)$ be an $N$-dimensional K\"ahler manifold with nowhere vanishing holomorphic volume form $\Omega$
   normalized so that $\int_X i^{N^2} \Omega \wedge \overline{\Omega}=1$.
 Let $\pi:X \rightarrow Y$ be a holomorphic fibration onto a Riemann surface $(Y,\beta_Y)$, with connected fibres 
 by $X_y$, $y \in Y$.
We assume without loss of generality that $\int_{X_y} \beta_X^{N-1}=1$ and $\int_Y \beta_Y=1$,
and that the singular fibres   have at worst canonical singularities.
We let $\omega_t$ denote the Calabi–Yau metrics on X in the class of
$\beta_t=t\beta_X+ \pi^*\beta_Y$, $t>0$. 

\smallskip

Understanding the behavior of $(X,\omega_t)$ as $t \rightarrow 0$ as been the subject of intensive studies
in the past decade, notably through collaborations of
Gross, Hein, Li, Tosatti, Weinkove, Yang and Zhang (see \cite{Tos20} and the references therein).
Theorem A allows one to provide an alternative proof of the main result of \cite{Li23}.
   
   \begin{theorem} \cite[Theorem 1.4]{Li23}
   There exists $C>0$ such that 
   $$
   {\rm diam} \left(X_y, \frac{\omega_t}{t} \right) \leq C
   $$
   for all $0<t \leq 1$ and for all $y \in Y\setminus S$.
   \end{theorem}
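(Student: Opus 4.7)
The strategy is to realize the family of rescaled fiber metrics $\{(X_y,\omega_t/t)\}$ as a K\"ahler family in the sense of Setting~\ref{set} and to apply Theorem~B. Since $Y$ is a compact Riemann surface, cover it by finitely many holomorphic disks $\D_\alpha \subset Y$, each biholomorphic to $2\D$ and allowed to be centred at a singular point $y_\alpha \in S$, so that every $y \in Y \setminus S$ lies in the punctured disk $\D_\alpha^*$ for some $\alpha$. Each restriction $\pi : \pi^{-1}(2\D_\alpha) \to 2\D_\alpha$ is then a proper surjective holomorphic family with connected smooth K\"ahler fibers of dimension $N-1$ away from the center, an irreducible reduced central fiber with at worst canonical singularities, and $\beta_X$ as relative reference K\"ahler form of uniformly bounded fiber volume. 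This matches Setting~\ref{set}.

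For each fixed $t \in (0,1]$ I would check that $\omega_t/t$, regarded as a relative K\"ahler form on $\pi^{-1}(\D_\alpha^*)$, belongs to $\mathcal{K}(\pi^{-1}(\D_\alpha), p, 1, B)$ for constants $p>1$ and $B>0$ independent of $\alpha$ and $t$. The cohomological condition is immediate: since $Y$ has complex dimension $1$, the pullback $\pi^*\beta_Y$ restricts to zero on every fiber, so $\{(\omega_t/t)|_{X_y}\} = \{\beta_X|_{X_y}\}$ in $H^{1,1}(X_y,\R)$, and one may take $A=1$. For the $L^p$ density bound, I would combine the Calabi--Yau equation $\omega_t^N = c_t\,\Omega \wedge \overline{\Omega}$, with $c_t = [\beta_t]^N = N t^{N-1} + O(t^N)$ by the chosen normalizations, with the pointwise identity $\omega_t^{N-1}\wedge\pi^*\beta_Y = \tfrac{1}{N} \mathrm{tr}_{\omega_t}(\pi^*\beta_Y)\,\omega_t^N$ and the vanishing $(\pi^*\beta_Y)^2 = 0$. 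The latter forces only the purely vertical part of $\omega_t^{N-1}$ to contribute after wedging with $\pi^*\beta_Y$, which yields the pointwise formula
\[
\frac{(\omega_t/t)^{N-1}|_{X_y}}{\beta_X^{N-1}|_{X_y}} \;=\; \frac{c_t}{N t^{N-1}} \cdot \mathrm{tr}_{\omega_t}(\pi^*\beta_Y) \cdot \rho,
\]
where $\rho = \Omega \wedge \overline{\Omega}/(\beta_X^{N-1} \wedge \pi^*\beta_Y)$ is a positive function on $\pi^{-1}(\D_\alpha)$, smooth on the smooth locus and with controlled behaviour at the canonical central fiber.

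The $L^p$ density bound thus reduces to a uniform-in-$t$ $L^p$ estimate for $\mathrm{tr}_{\omega_t}(\pi^*\beta_Y)$ on $\pi^{-1}(\D_\alpha^*)$, and this is where I expect the main obstacle to lie. On compact subsets of $\pi^{-1}(Y \setminus S)$, a uniform pointwise bound is classical, obtained via a Chern--Lu / Schwarz-type estimate applied to an auxiliary function built from the Monge--Amp\`ere potential, following Tosatti~\cite{Tos10}. Near a singular fiber $y_\alpha \in S$, one must adapt this Chern--Lu argument on the disk $\D_\alpha$ to the local structure of the canonical singularity, deriving either a uniform pointwise or at least an $L^p$ bound for $\mathrm{tr}_{\omega_t}(\pi^*\beta_Y)$; this is the technical heart of the argument. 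Once the bound is in place, $\omega_t/t$ belongs uniformly to $\mathcal{K}(\pi^{-1}(\D_\alpha), p, 1, B)$, and Theorem~B yields $\mathrm{diam}(X_y, \omega_t/t) \leq C$ for every $y \in \D_\alpha^*$ and $t \in (0,1]$ with $C$ independent of $\alpha$ and $t$. Taking the maximum over the finite cover of $Y$ completes the proof.
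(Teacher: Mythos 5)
Your overall strategy is exactly the paper's: fit the rescaled fibers $(X_y,\omega_t/t)$ into Setting~\ref{set} over disks in $Y$, observe that the fiberwise class is $\{\beta_X|_{X_y}\}$ (so $A=1$ and $V_{\omega_t/t|_{X_y}}=1$), establish a uniform $L^p$ bound on the fiberwise density, and conclude by Theorem~\ref{thm:diameter}. Your pointwise computation reducing the density to $\frac{c_t}{Nt^{N-1}}\,\mathrm{tr}_{\omega_t}(\pi^*\beta_Y)\cdot\rho$ with $\rho=\Omega\wedge\overline{\Omega}/(\beta_X^{N-1}\wedge\pi^*\beta_Y)$ is correct and is essentially the content of the reduction the paper outsources to the references.

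The gap is that the two analytic inputs making this density uniformly $L^p$ are not established, and you have slightly misplaced where the difficulty sits. The Schwarz-lemma bound $\mathrm{tr}_{\omega_t}(\pi^*\beta_Y)\leq C$ is in fact a \emph{global, pointwise} estimate on all of $X$, uniform in $t\in(0,1]$, including near the singular fibers: the Chern--Lu argument only needs an upper bound on the bisectional curvature of the target, and $\pi^*\beta_Y$ is the pullback of a fixed metric on a curve, so no adaptation to the canonical singularities of the fibers is required there (this is \cite[Proposition 2.3]{Li23}, going back to Song--Tian and Tosatti). The genuinely delicate point is the one you pass over in a clause ("with controlled behaviour at the canonical central fiber"): as $y$ approaches the discriminant $S$, the factor $\rho$ blows up along the locus where $d\pi$ degenerates, and one needs $\int_{X_y}\rho^p\,(\beta_X|_{X_y})^{N-1}\leq B$ for some $p>1$ \emph{uniformly} in $y\in Y\setminus S$. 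This is precisely where the hypothesis that the singular fibers have at worst canonical singularities enters, and it is supplied by \cite[Lemma 4.4]{DnGG23}. Without these two inputs your proof does not close; with them it coincides with the paper's.
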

   
  Here  $S\subset Y$ denotes the discriminant locus; $\pi$ is a submersion over $Y\setminus S$ so that every fiber $X_y, y\in Y\setminus S$ is smooth.

   \begin{proof}
   Set $n=N-1$ and $\omega_y={{\frac{\omega_t}{t}}}_{|X_y}$.  
   Observe  that $V_{\omega_y}=\int_{X_y} \omega_y^n=\int_{X_y} \beta_X^{N-1}=1$.
   It follows from \cite[Proposition 2.3]{Li23} and \cite[Lemma 4.4]{DnGG23} that there exists $p>1$ such that
   $$
   \omega_y^n =f ({\beta_X}_{|X_y})^n 
   \; \; \text{ with } \; \; 
   \int_{X_y} f^p ({\beta_X}_{|X_y})^n  \leq B,
   $$
   for some constant $B>0$ independent of $t,y$.
   The conclusion follows therefore from Theorem \ref{thm:diameter}
 applied  to the family $(X_y,\omega_y)$ with $A=1$.
   \end{proof}

\end{document}